\numberwithin{equation}{section}
\title{Cousin groups and Hodge structures}
\author{Rodion N. D\'eev}
\newcommand{\fg}{\mathfrak{g}}
\DeclareMathOperator{\End}{End}
\newcommand{\Z}{\mathbb{Z}}
\newcommand{\bC}{\mathbb{C}}
\DeclareMathOperator{\id}{id}
\newcommand{\R}{\mathbb{R}}
\DeclareMathOperator{\GL}{GL}
\DeclareMathOperator{\Gal}{Gal}
\newcommand{\Q}{\mathbb{Q}}
\DeclareMathOperator{\Hom}{Hom}
\newtheorem{pr}{Proposition}[section]
\newtheorem*{probl}{Problem}
\theoremstyle{definition}
\newtheorem{defn}{Definition}
\begin{document}
\maketitle
\begin{abstract}
We consider the geometric properties of Hodge Cousin groups, introduced in an unpublished paper \cite{OVV}, emphasizing the case of Hodge Cousin groups corresponding to polarized $\Q$-Hodge structures. Basing on this consideration, we introduce the class of abelian Cousin groups and prove an analogue of Poincar\'e complete reducibility theorem for them.
\end{abstract}
\tableofcontents

\newpage

\section{Introduction}

From now onwards all the Lie groups are assumed to be connected.
\begin{defn}
A complex Lie group $G$ is called {\it Cousin group} if any holomorphic function on it is constant.
\end{defn}

This concept goes back to the 1910 Cousin's paper \cite{C}. The name is due to Huckleberry and Margulis \cite{HM}. The definition of Cousin groups appeared in the paper of Morimoto \cite{M} under the name of {\it H.C groups} and in the paper of Kopfermann \cite{K} under the name of {\it toroidal groups}. For a rather complete account on Cousin groups, see \cite{AK}.

\begin{pr}
\label{HahnBanach}
Any representation of a Cousin group $G$ in a Banach space $V$ is trivial.
\end{pr}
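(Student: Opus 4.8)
The plan is to reduce the statement to the defining property of a Cousin group via the Hahn--Banach theorem, as the label of the proposition already suggests. Let $\rho\colon G\to\GL(V)$ be the given representation, where $\GL(V)$ denotes the group of bounded invertible operators on $V$, regarded as a complex Banach Lie group (it is an open subset of the Banach space $\End(V)$). The key observation is that although $V$ may be infinite-dimensional, every \emph{scalar} matrix coefficient of $\rho$ is a holomorphic function on $G$, and therefore, by hypothesis, constant.

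First I would fix a vector $v\in V$ and a bounded linear functional $\phi\in V^{*}$, and consider the function $f_{v,\phi}\colon G\to\bC$ defined by $f_{v,\phi}(g)=\phi\bigl(\rho(g)v\bigr)$. This is the composition of the holomorphic map $\rho$ with the evaluation $T\mapsto\phi(Tv)$, which is the restriction to $\GL(V)$ of a bounded $\bC$-linear functional on $\End(V)$ and hence holomorphic; so $f_{v,\phi}$ is a holomorphic function on $G$. By the definition of a Cousin group it is constant, and evaluating at the identity (where $\rho(e)=\id$) gives $\phi\bigl(\rho(g)v\bigr)=f_{v,\phi}(g)=f_{v,\phi}(e)=\phi(v)$ for every $g\in G$.

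Finally, I would let $v$ and $\phi$ vary: the identity $\phi\bigl(\rho(g)v-v\bigr)=0$ now holds for all $g\in G$, all $v\in V$ and all $\phi\in V^{*}$. Since, by the Hahn--Banach theorem, the dual space $V^{*}$ separates the points of $V$, this forces $\rho(g)v=v$ for all $g$ and $v$, i.e.\ $\rho$ is trivial.

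The argument is essentially routine once the setup is in place; the only point requiring a little care is the holomorphy of the matrix coefficients $f_{v,\phi}$, which is where one must use the structure of $\GL(V)$ as a complex Banach Lie group (so that a representation of $G$ on $V$ is a holomorphic map into it). Note that for finite-dimensional $V$ the same proof applies verbatim with $\phi$ ranging over the coordinate functionals, and Hahn--Banach is not needed; the Banach-space generality is exactly what that theorem supplies.
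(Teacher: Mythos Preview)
Your argument is correct and follows essentially the same route as the paper: pull back bounded linear functionals on $\End(V)$ (you use the particular functionals $T\mapsto\phi(Tv)$) to obtain holomorphic functions on $G$, which are constant by the Cousin property, and then invoke Hahn--Banach to conclude that $\rho$ is trivial. The only cosmetic difference is that the paper applies Hahn--Banach directly in $\End(V)$ to deduce that the image of $G$ is a single point, whereas you apply it in $V$ to get $\rho(g)v=v$; the content is the same.
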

\begin{proof}
A representation of $G$ in a Banach space $V$ is a map $G\to\End(V)$. Pullback of any linear functional on $\End(V)$ to $G$ is constant, so, by Hahn--Banach theorem, image of $G$ is a single point.
\end{proof}

\begin{pr}
\label{Verbitsky}
Finite-dimensional (Banach) Cousin groups are commutative. Moreover, they are quotients of a complex vector (Banach) space by a discrete subgroup. 
\end{pr}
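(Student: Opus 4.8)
The plan is to obtain commutativity from Proposition~\ref{HahnBanach} applied to the adjoint representation, and then to realize $G$ as a quotient of its Lie algebra via the exponential map.

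First I would consider the adjoint representation $\mathrm{Ad}\colon G\to\GL(\fg)\subset\End(\fg)$, where $\fg$ is the (Banach) Lie algebra of $G$; this is a holomorphic representation of $G$ in the Banach space $\End(\fg)$. By Proposition~\ref{HahnBanach} it is trivial, so $\mathrm{Ad}(g)=\id$ for every $g\in G$. Differentiating at the identity, the differential of $\mathrm{Ad}$ is the map $X\mapsto\mathrm{ad}_X$, which must therefore vanish; hence $[X,Y]=0$ for all $X,Y\in\fg$, i.e.\ $\fg$ is abelian. Since $G$ is connected, it follows that $G$ is commutative.

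Next, since $G$ is connected and abelian, the exponential map $\exp\colon\fg\to G$ is a holomorphic homomorphism. It is a local biholomorphism at $0\in\fg$, so its image contains a neighbourhood of the identity; being a subgroup, this image is open, and by connectedness of $G$ it is all of $G$. Thus $\exp$ is surjective, and its kernel $\Lambda=\ker\exp$ is discrete (again because $\exp$ is a local homeomorphism at $0$, so $\Lambda$ meets a suitable neighbourhood of $0$ only in $\{0\}$). Therefore $G\cong\fg/\Lambda$ as complex Lie groups, with $\fg$ a complex vector (Banach) space and $\Lambda$ a discrete subgroup, which is the second assertion.

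The only point that needs real care is the passage to the Banach setting: one must invoke that for a connected Banach Lie group the adjoint representation is holomorphic into $\End(\fg)$, that $\exp$ is a local biholomorphism at the origin, and that an open subgroup of a connected group is the whole group. In finite dimensions all of this is classical and the argument is essentially a one-liner; in the Banach case these are standard structural facts, but since they are exactly what carries the ``Banach'' version of the statement, I would state them explicitly rather than conflate the two cases.
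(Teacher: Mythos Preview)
Your argument is correct and follows exactly the paper's approach: triviality of the adjoint representation via Proposition~\ref{HahnBanach} gives commutativity, and then the exponential map realizes $G$ as $\fg/\Lambda$ with $\Lambda$ discrete. You have simply spelled out the surjectivity and discreteness steps (and the Banach caveats) more carefully than the paper's terse version.
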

\begin{proof}
By Proposition \ref{HahnBanach}, the adjoint representation of a Cousin group $G$ in its Lie algebra $\fg$ is trivial. That means that $G$ is commutative. Since the Lie algebra $\fg$ is abelian, the exponential map $\fg\to G$ is a homomorphism. Its kernel is discrete since the quotient is a manifold of the same dimension. 
\end{proof}

Of course, complex tori are Cousin groups. Nevertheless, Cousin groups need not to be compact. For example, a quotient of $\bC^2 = \bC\langle v,w\rangle$ by a rank $3$ lattice which spans the real subspace $\{(v,w)\mid\mathrm{Im}~w=0\}$ but intersects the complex subspace $\{w=0\}$ only by $\{0\}$, is a Cousin group.

\begin{pr}[K.~Kopfermann, 1964 \cite{K}]
\label{Kopfermann}
Any commutative complex Lie group is a product of Cousin group, several copies of $\bC$ and several copies of $\GL(1,\bC)$.
\end{pr}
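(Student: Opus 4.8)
The plan is to represent $G$ as $\bC^n/\Lambda$ and then strip off the two ``linear'' factors, leaving a Cousin group behind. Since $G$ is commutative, $\fg$ is abelian, and exactly as in the proof of Proposition~\ref{Verbitsky} the exponential map is a surjective homomorphism $\bC^n\to G$ with discrete kernel $\Lambda$, so $G\cong\bC^n/\Lambda$. Let $W=\R\text{-span}(\Lambda)$ and $W_\bC=W+iW$, the smallest complex subspace containing $\Lambda$; choosing a complex complement $A$ to $W_\bC$ in $\bC^n$, the projection of $\Lambda$ to $A$ vanishes, so $G\cong(W_\bC/\Lambda)\times A$ with $A\cong\bC^a$, $a=n-\dim_\bC W_\bC$. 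It therefore suffices to treat the case in which $\Lambda$ spans $\bC^n$ over $\bC$.

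In that case I would strip off the copies of $\GL(1,\bC)$ using characters. Up to normalization, holomorphic homomorphisms $G\to\GL(1,\bC)$ are the $\bC$-linear functionals $\phi$ on $\bC^n$ with $\phi(\Lambda)\subset\Z$; call this set $P$. Restriction to $\Lambda$ embeds $P$ into $\Hom(\Lambda,\Z)$, so $P$ is free abelian of some finite rank $b$, and a $\Z$-basis $\phi_1,\dots,\phi_b$ of it is automatically $\bC$-linearly independent (its restrictions to $\Lambda$ are $\Z$-independent in $\Hom(\Lambda,\Z)$, hence $\bC$-independent, and $\Lambda$ spans $\bC^n$). Thus these functionals assemble into a surjection $\Phi\colon\bC^n\to\bC^b$ with $\Phi(\Lambda)\subseteq\Z^b$; one checks $\Phi(\Lambda)=\Z^b$, since otherwise a suitable $\bC$-linear functional on $\bC^b$ composed with $\Phi$ would lie in $P$ but outside $\Z\phi_1+\dots+\Z\phi_b$. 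Splitting $0\to\Lambda\cap\ker\Phi\to\Lambda\xrightarrow{\Phi}\Z^b\to0$ by lifts $\lambda_1',\dots,\lambda_b'$ of the standard basis and setting $L=\text{span}_\bC(\lambda_1',\dots,\lambda_b')$, one gets $\bC^n=\ker\Phi\oplus L$ and $\Lambda=(\Lambda\cap\ker\Phi)\oplus\bigoplus_j\Z\lambda_j'$, hence $G\cong Y\times\GL(1,\bC)^b$ with $Y:=\ker\Phi/(\Lambda\cap\ker\Phi)=:\bC^c/\Lambda_0$. Projecting $\bC^n$ onto $\ker\Phi$ along $L$, one verifies that $\Lambda_0$ still spans $\bC^c$ over $\bC$ and that $Y$ admits no nonzero $\bC$-linear functional taking integer values on $\Lambda_0$ (such a functional pulls back into $P$ and is forced to be a trivial combination of the $\phi_j$).

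The heart of the proof is then to show that such a $Y$ is a Cousin group. Given a holomorphic function $h$ on $Y$, I would Fourier-expand it along the maximal compact subtorus $T=(\R\Lambda_0)/\Lambda_0$. Since translations act on $Y$ biholomorphically, each isotypic component $h_m$ — indexed by a character $\chi_m$ of $T$, i.e.\ an element $m$ of the lattice dual to $\Lambda_0$ — is again holomorphic, and $h_m(z+u)=e^{2\pi i\langle m,u\rangle}h_m(z)$ for $u\in\R\Lambda_0$. In particular $|h_m|$ is constant on every coset of $\R\Lambda_0$, hence on every coset of the maximal complex subspace $W_0=\R\Lambda_0\cap i\R\Lambda_0$ contained in $\R\Lambda_0$; a holomorphic function with constant modulus on a complex affine subspace is constant, so $h_m$ descends to $\bC^c/W_0$, where the image of $\R\Lambda_0$ is a totally real subspace of full real dimension. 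On such a space the quasi-periodicity forces $h_m$ to be a constant multiple of $e^{2\pi i\psi}$ for a $\bC$-linear functional $\psi$; periodicity under $\Lambda_0$ then gives $\psi(\Lambda_0)\subset\Z$, so $\psi=0$ by the property of $Y$ just established, whence $h_m$ is constant and therefore $m=0$. Thus $h=h_0$, a $T$-invariant (hence translation-invariant, since $\R\Lambda_0$ spans $\bC^c$ over $\bC$) holomorphic function, i.e.\ a constant; so $Y$ is Cousin and $G\cong Y\times\bC^a\times\GL(1,\bC)^b$.

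The main obstacle is the third step. The first two are linear algebra over $\Z$ and $\bC$, with only the two minor points flagged above ($\Phi(\Lambda)=\Z^b$, and $\Lambda_0$ inheriting the stated properties) requiring a short check. Deciding when a quasi-torus $\bC^c/\Lambda_0$ admits no nonconstant holomorphic function — precisely the property defining Cousin groups — is the genuinely analytic ingredient, and everything there hinges on the factorization $h_m=\text{const}\cdot e^{2\pi i\psi}$ together with the hypothesis that such a $\psi$ must vanish.
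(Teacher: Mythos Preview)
The paper does not actually prove this proposition: Kopfermann's theorem is stated with a reference to \cite{K} and then used as a black box (for instance in the proof of Proposition~\ref{P}), so there is no ``paper's own proof'' to compare against.

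Your sketch is the classical argument and is essentially correct. The linear-algebra steps (splitting off a $\bC^a$ via the complex span of $\Lambda$, then splitting off $\GL(1,\bC)^b$ via a $\Z$-basis of the character lattice $P$) are fine; the two small checks you flag---that $\Phi(\Lambda)=\Z^b$ and that $\Lambda_0$ inherits both the $\bC$-spanning property and the absence of integer-valued $\bC$-linear functionals---go through exactly as you indicate. The analytic core, showing that $Y=\bC^c/\Lambda_0$ with no such functionals is Cousin, is also sound: each Fourier component $h_m$ has constant modulus along the maximal complex subspace $W_0$ of $\R\Lambda_0$, hence descends to the quotient, where $\R\Lambda_0/W_0$ is a real form; an entire function invariant under translation by a real form is constant by the Cauchy--Riemann equations, which yields $h_m=C\,e^{2\pi i\psi}$ with $\psi$ $\bC$-linear and $\psi(\Lambda_0)\subset\Z$, forcing $\psi=0$ and then $m=0$. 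So the outline is complete; only routine details remain to be written out.
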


\begin{pr}
\label{P}
Commutative complex Lie group is Cousin if and only if it does not have nontrivial characters.
\end{pr}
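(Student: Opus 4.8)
The plan is to prove the two implications separately, the forward one being immediate and the converse resting on Kopfermann's structure theorem, Proposition~\ref{Kopfermann}.

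For the ``only if'' direction: if $G$ is Cousin and $\chi\colon G\to\GL(1,\bC)$ is a character, then, regarding $\bC^{*}=\GL(1,\bC)$ as a subset of $\bC$, we may view $\chi$ as a holomorphic function on $G$, which is therefore constant by the definition of a Cousin group. Being a group homomorphism, $\chi$ then equals the constant $1$, so it is trivial.

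For the ``if'' direction: suppose $G$ is a commutative complex Lie group with no nontrivial characters. By Proposition~\ref{Kopfermann} write $G\cong C\times\bC^{a}\times\GL(1,\bC)^{b}$ with $C$ a Cousin group. I then exhibit a nontrivial character attached to each non-Cousin factor: the composition of the projection of $G$ onto one $\bC$-summand with $\exp\colon\bC\to\GL(1,\bC)$ is surjective, hence a nontrivial character, whenever $a\geq 1$; and the projection of $G$ onto one $\GL(1,\bC)$-summand is a nontrivial character whenever $b\geq 1$. The hypothesis rules both out, forcing $a=b=0$, so $G\cong C$ is Cousin.

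I do not expect a real obstacle here; the only care needed is in the interface with the definitions --- reading ``character'' as ``holomorphic homomorphism to $\GL(1,\bC)$'' and checking that the maps produced above are nonconstant (indeed surjective). If one wished to avoid Proposition~\ref{Kopfermann}, one could instead note (as in the proof of Proposition~\ref{Verbitsky}) that any connected commutative complex Lie group is $V/\Lambda$ with $V$ its abelian Lie algebra and $\Lambda=\ker\exp$ discrete; characters of $V/\Lambda$ then correspond to nonzero $\bC$-linear functionals $\ell$ on $V$ with $\ell(\Lambda)\subset 2\pi i\Z$, and the genuinely hard step would be to show that a single nonconstant holomorphic function on $V/\Lambda$ already forces such an $\ell$ to exist --- essentially re-deriving the non-Cousin factors of Kopfermann's decomposition. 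Given that Proposition~\ref{Kopfermann} is at hand, the short route is the one I would carry out.
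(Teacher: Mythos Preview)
Your proposal is correct and follows essentially the same route as the paper: the forward direction is immediate (the paper cites Proposition~\ref{HahnBanach}, you argue directly from the definition that a character is a holomorphic function hence constant---both are one-liners), and the converse uses Kopfermann's decomposition (Proposition~\ref{Kopfermann}) to produce a character from any $\bC$- or $\GL(1,\bC)$-factor, exactly as the paper does.
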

\begin{proof}
One direction follows from Proposition \ref{HahnBanach}. The other way around, by Kopfermann's theorem, any commutative complex Lie group $G$ is isomorphic to the product of a Cousin group and a group $\GL(1,\bC)^n\times\bC^m$, which admits a homomorphism to $\GL(1,\bC)^{n+m}$ via exponent provided $n+m>0$ (that is, $G$ is not Cousin).
\end{proof}

\section{Complex tori and Hodge structures}
This chapter is intended to be an exposition of the well-known correspondence between the complex tori and Hodge structures.

\begin{defn}
Let $k\subseteq\R$ be a subring. A {\it $k$-Hodge structure of weight $n$} is a pair $(V_k,\{V^{p,q}\}_{p+q=n}^{p,q \geq 0})$ of a finite generated $k$-module $V_k$ and a splitting $V_k\otimes_k\bC=\oplus_{p+q=n}V^{p,q}$ such that $\overline{V^{p,q}}=V^{q,p}$.
\end{defn}

\begin{pr}
\label{k-k'}
Let $k\subseteq k'\subseteq\R$ be subrings. The category of $k$-Hodge structures is equivalent to category of $k'$-Hodge structures on modules of form $V_k\otimes k'$ with $k$-linear maps, where $V_k$ are $k$-modules.
\end{pr}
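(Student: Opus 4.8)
The plan is to show that the evident scalar-extension functor $V_k\mapsto V_k\otimes_k k'$ is the required equivalence; there is essentially no content beyond matching definitions, so the work consists of (a) producing the relevant natural isomorphism of complexifications, (b) checking it is compatible with complex conjugation, and (c) reading off well-definedness, essential surjectivity, and full faithfulness.

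First I would record, for each finitely generated $k$-module $V_k$, the canonical isomorphism
\[
c_{V_k}\colon (V_k\otimes_k k')\otimes_{k'}\bC \;\xrightarrow{\ \sim\ }\; V_k\otimes_k\bC,
\]
coming from associativity of $\otimes$ and $k'\otimes_{k'}\bC=\bC$; it is natural in $V_k$, and $V_k\otimes_k k'$ is again finitely generated over $k'$. The essential observation is that $c_{V_k}$ intertwines the two natural complex conjugations: on $V_k\otimes_k\bC$ conjugation is $\id_{V_k}\otimes(z\mapsto\bar z)$, on $(V_k\otimes_k k')\otimes_{k'}\bC$ it is $\id_{V_k\otimes_k k'}\otimes(z\mapsto\bar z)$, and these agree under $c_{V_k}$ exactly because $k'\subseteq\R$, so conjugation fixes the intermediate ring $k'$ pointwise. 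Hence a grading $V_k\otimes_k\bC=\bigoplus_{p+q=n}V^{p,q}$ satisfies $\overline{V^{p,q}}=V^{q,p}$ if and only if the transported grading of $(V_k\otimes_k k')\otimes_{k'}\bC$ does; that is, giving a $k$-Hodge structure of weight $n$ on $V_k$ is the same data as giving a $k'$-Hodge structure of weight $n$ on $V_k\otimes_k k'$.

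With this in hand I would define the functor $\Phi$ on objects by $(V_k,\{V^{p,q}\})\mapsto(V_k\otimes_k k',\{c_{V_k}^{-1}(V^{p,q})\})$ and on morphisms by $f\mapsto f\otimes_k\id_{k'}$; naturality of $c$ makes this a functor, and the previous paragraph shows the output is a $k'$-Hodge structure whose underlying module is of the prescribed form $V_k\otimes_k k'$, so $\Phi$ takes values in the category of the statement, and since every object of that category is by construction of the form $V_k\otimes_k k'$ with a Hodge grading that transports back to $V_k$, $\Phi$ is essentially surjective onto it. Full faithfulness is the point where ``with $k$-linear maps'' is used: under $c$ the complexification of $f\otimes_k\id_{k'}$ is identified with $f\otimes_k\id_\bC$, so $f$ is Hodge-compatible exactly when $\Phi(f)$ is, and since a morphism on the target side is by definition a $k$-linear map of the underlying $k$-modules $V_k$ that is Hodge-compatible, $f\mapsto\Phi(f)$ is a bijection on $\Hom$-sets. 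Thus $\Phi$ is an equivalence (indeed an isomorphism of categories). I do not expect a genuine obstacle here: the only step that uses anything beyond the universal property of tensor products is the compatibility of $c_{V_k}$ with complex conjugation, and that is precisely where the hypothesis $k\subseteq k'\subseteq\R$ — rather than an arbitrary ring extension — enters.
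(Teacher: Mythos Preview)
Your argument is correct and is exactly the evident scalar-extension verification the paper has in mind; the paper's own proof is the single sentence ``Follows easily from the above Definition,'' so you have simply written out the details it omits.
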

\begin{proof}
Follows easily from the above Definition.
\end{proof}

\begin{pr}
\label{Jacobi}
Let $V_\Z$ be a $\Z$-Hodge structure of weight $1$. Denote by $\pi$ the projection of $V_\bC$ onto $V^{1,0}$ along $V^{0,1}$. Then the quotient $V^{1,0}/\pi(V_\Z)$ is a complex torus.
\end{pr}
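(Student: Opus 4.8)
The plan is to exhibit $\pi(V_\Z)$ as a cocompact lattice in the complex vector space $V^{1,0}$, so that the quotient is a compact complex Lie group of the form $\bC^g/\Lambda$ with $\Lambda$ a lattice of full rank, i.e.\ a complex torus.

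First I would reduce to the case that $V_\Z$ is a free $\Z$-module. The torsion submodule $T\subseteq V_\Z$ is killed under the canonical map $V_\Z\to V_\Z\otimes_\Z\bC=V_\bC$, so $\pi(V_\Z)=\pi(V_\Z/T)$ while $V_\Z/T$ is free; hence we may assume $V_\Z$ is free, say of rank $r$. Put $V_\R:=V_\Z\otimes_\Z\R$, so that $V_\bC=V_\R\otimes_\R\bC$ comes equipped with a complex conjugation fixing $V_\R$ pointwise, and $V_\bC=V^{1,0}\oplus V^{0,1}$ with $\overline{V^{1,0}}=V^{0,1}$. Since conjugation interchanges the two summands, $\dim_\bC V^{1,0}=\dim_\bC V^{0,1}=:g$ and therefore $r=2g$.

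The crucial step is to verify that the restriction $\pi|_{V_\R}\colon V_\R\to V^{1,0}$ is an isomorphism of real vector spaces. It is injective: if $v\in V_\R$ lies in $\ker\pi=V^{0,1}$, then $v=\bar v\in\overline{V^{0,1}}=V^{1,0}$, so $v\in V^{1,0}\cap V^{0,1}=\{0\}$. Since $\dim_\R V_\R=2g=\dim_\R V^{1,0}$, injectivity already forces $\pi|_{V_\R}$ to be an $\R$-linear isomorphism. This is precisely the point at which the Hodge-structure axiom $\overline{V^{p,q}}=V^{q,p}$ is used, and I regard it as the only real content of the proposition; the remaining steps are bookkeeping.

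Finally, $V_\Z$ is a cocompact lattice inside $V_\R$ (being free of rank $2g=\dim_\R V_\R$), so its image $\pi(V_\Z)$ under the isomorphism $\pi|_{V_\R}$ is a cocompact lattice in the real vector space underlying $V^{1,0}$. In particular $\pi(V_\Z)$ is discrete, and $V^{1,0}/\pi(V_\Z)$ is a compact complex Lie group isomorphic to $\bC^g/\Lambda$ for some full-rank lattice $\Lambda$; that is, it is a complex torus.
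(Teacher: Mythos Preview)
Your proof is correct and follows essentially the same route as the paper: both arguments rest on the observation that $\pi|_{V_\R}\colon V_\R\to V^{1,0}$ is an $\R$-linear isomorphism, whence $\pi(V_\Z)$ is a full-rank lattice and the quotient is a complex torus. Your version is more detailed---you actually justify the isomorphism via injectivity and a dimension count (the paper simply asserts it), and you handle the possible torsion in $V_\Z$ explicitly---but there is no genuine methodological difference.
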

\begin{proof}
Of course $\pi \colon V_\R \to V^{1,0}$ is an isomorphism of $\R$-vector spaces. One has $\R\langle\pi(V_\Z)\rangle = \pi(\R\langle V_{\Z} \rangle) = \pi(V_\Z\otimes\R) = \pi(V_\R) = V^{1,0}$. So $\pi(V_\Z)$ has full rank inside $V^{1,0}$ considered as an $\R$-vector space, and the quotient is a compact torus. It inherits complex structure from $V^{1,0}$.
\end{proof}

\begin{pr}
\label{r_1_equals_cpx}
The category of complex vector spaces is equivalent to the category of $\R$-Hodge structures of weight $1$.
\end{pr}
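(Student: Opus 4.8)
The plan is to identify an $\R$-Hodge structure of weight $1$ on $V_\R$ with the datum of a complex structure, i.e.\ an operator $J\in\End_\R(V_\R)$ with $J^2=-\id$, and then to observe that a real vector space equipped with such a $J$ is precisely a complex vector space. Concretely, I would define a functor $F$ from $\R$-Hodge structures of weight $1$ to complex vector spaces by sending $(V_\R,\{V^{1,0},V^{0,1}\})$ to the real vector space $V_\R$ equipped with the complex structure $J$ characterized by $J_\bC=i\cdot\id$ on $V^{1,0}$ and $J_\bC=-i\cdot\id$ on $V^{0,1}$, where $J_\bC$ denotes the $\bC$-linear extension of $J$ to $V_\bC=V_\R\otimes_\R\bC$. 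In the other direction, a functor $G$ sends a complex vector space $W$ to the real vector space underlying $W$, equipped with the decomposition of $W\otimes_\R\bC$ into the $(+i)$- and $(-i)$-eigenspaces of the $\bC$-linear extension of multiplication by $i$.

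For $F$ to be well defined I would need to check that the operator $J_\bC$ prescribed above actually comes from a real operator $J$; this is exactly where the reality constraint $\overline{V^{1,0}}=V^{0,1}$ enters. Let $c\colon V_\bC\to V_\bC$ be the conjugation $v\otimes\lambda\mapsto v\otimes\bar\lambda$, a conjugate-linear involution whose fixed points are $V_\R$. An operator $A\in\End_\bC(V_\bC)$ is the complexification of a real operator if and only if $cA=Ac$. Since $\overline{V^{1,0}}=V^{0,1}$, the conjugation $c$ interchanges the two eigenspaces, and the prescribed $J_\bC$ (which is $\pm i$ on them) satisfies $cJ_\bC=J_\bC c$; hence $J_\bC=J\otimes_\R\bC$ for a unique $J\in\End_\R(V_\R)$, and $J^2=-\id$ because $J_\bC^2=-\id$. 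Conversely, for $G$ one checks that conjugation on $W\otimes_\R\bC$ swaps the two eigenspaces of the real operator ``multiplication by $i$'', so the reality condition holds and $G(W)$ is a genuine weight-$1$ $\R$-Hodge structure.

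It remains to see that $F$ and $G$ are mutually quasi-inverse and that they match morphisms. On objects, $G\circ F$ and $F\circ G$ are the identity once one unwinds the definitions, since $V^{1,0}$ and $V^{0,1}$ are recovered as the $\pm i$-eigenspaces of $J_\bC$. For morphisms: a real linear map $f\colon V_\R\to V'_\R$ underlies a morphism of $\R$-Hodge structures precisely when $f_\bC$ preserves the bigradings, equivalently when $f_\bC$ commutes with $J_\bC$ and $J'_\bC$, equivalently when $f$ commutes with $J$ and $J'$, equivalently when $f$ is $\bC$-linear for the induced complex structures. Thus the $\Hom$-sets agree, so $F$ is fully faithful; it is essentially surjective because every complex vector space $W$ equals $F(G(W))$.

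I do not expect a serious obstacle here: the statement is a repackaging of standard linear algebra. The one point that demands genuine care is the equivalence between the reality condition on the Hodge decomposition and the descent of the $\bC$-linear operator $J_\bC$ to an $\R$-linear operator, i.e.\ the bookkeeping with the conjugate-linear involution $c$; everything else is formal.
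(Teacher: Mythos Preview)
Your proposal is correct and follows essentially the same route as the paper: construct the complex structure operator $I$ on $V_\R$ via $I_\bC=\sqrt{-1}\id_{V^{1,0}}\oplus(-\sqrt{-1})\id_{V^{0,1}}$, use the reality condition $\overline{V^{1,0}}=V^{0,1}$ to see that $I_\bC$ descends to $V_\R$, and invert by taking eigenspaces. If anything, you are more explicit than the paper about the descent step (via the conjugate-linear involution $c$) and about the identification of morphisms, which the paper handles in a single sentence.
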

\begin{proof}
An $\R$-Hodge structure of weight $1$ is an $\R$-vector space $V_\R$ together with a splitting $V_\bC=V_\R\otimes\bC = V^{1,0}\oplus V^{0,1}$ such that $V^{0,1}=\overline{V^{1,0}}$. Define $I\in\End(V_\bC)$ as $\sqrt{-1}\id_{V^{1,0}}\oplus(-\sqrt{-1})\id_{V^{0,1}}$. This operator is real, that is, preserves $V_\R=V_\bC^{\Gal(\bC:\R)}$, and $I^2=-\id$. The other way around, any $I\in\End(V_\R)$ with $I^2=-\id$ determines an $\R$-Hodge structure on $V_\R$ by setting $V^{1,0}$ to be the $\sqrt{-1}$-eigenspace of $I$.

It can be easily verified that maps of Hodge structure give rise to maps of vector spaces compatible with the complex structures operator, and vice versa.
\end{proof}

\begin{pr}
\label{complex}
Suppose $V_\R$ is an $\R$-Hodge structure of weight $1$. Then the projection $V_\R=V_\bC^{\Gal(\bC:\R)} \to V^{1,0}$ is complex linear w.~r.~t. the complex structure on $V_\R$ corresponding to the Hodge structure.
\end{pr}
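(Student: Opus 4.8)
The plan is to reduce everything to a $\bC$-linear identity on $V_\bC$ and then restrict to the real subspace. Recall from the proof of Proposition \ref{r_1_equals_cpx} that the complex structure on $V_\R$ corresponding to the Hodge structure is the restriction to $V_\R$ of the operator $I\in\End(V_\bC)$ acting as $\sqrt{-1}\,\id$ on $V^{1,0}$ and as $-\sqrt{-1}\,\id$ on $V^{0,1}$; in particular $I$ is $\bC$-linear on $V_\bC$ and preserves the decomposition $V_\bC=V^{1,0}\oplus V^{0,1}$. Recall also that the target $V^{1,0}$ is being regarded with its tautological complex structure, that is, its complex structure operator is multiplication by $\sqrt{-1}$.

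First I would write the projection $\pi\colon V_\bC\to V^{1,0}$ along $V^{0,1}$ explicitly on the direct sum: for $w=w^{1,0}+w^{0,1}$ with $w^{p,q}\in V^{p,q}$ one has $\pi(w)=w^{1,0}$, and $\pi$ is $\bC$-linear. Then I would compute $\pi(Iw)=\pi\bigl(\sqrt{-1}\,w^{1,0}-\sqrt{-1}\,w^{0,1}\bigr)=\sqrt{-1}\,w^{1,0}=\sqrt{-1}\,\pi(w)$, so that $\pi\circ I=\sqrt{-1}\cdot\pi$ holds identically on $V_\bC$.

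Finally I would restrict this identity to $V_\R=V_\bC^{\Gal(\bC:\R)}$. The map in the statement is exactly the composite $V_\R\hookrightarrow V_\bC\xrightarrow{\pi}V^{1,0}$, and $I|_{V_\R}$ is by construction the complex structure on $V_\R$; hence $\pi(Iv)=\sqrt{-1}\,\pi(v)$ for every $v\in V_\R$, which is precisely the assertion that $\pi|_{V_\R}$ is complex linear. (That it is moreover a $\bC$-linear isomorphism follows as in Proposition \ref{Jacobi}, since $\pi|_{V_\R}$ is already an $\R$-linear isomorphism onto $V^{1,0}$.)

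I do not expect a genuine obstacle here; the statement is essentially a bookkeeping check. The only point requiring care is keeping straight the three complex structures in play — the $\bC$-linear extension used to define $I$ on $V_\bC$, the induced complex structure $I|_{V_\R}$ on $V_\R$, and the tautological one on $V^{1,0}$ — and making sure $\pi$ is treated as a $\bC$-linear map on all of $V_\bC$ before restricting to $V_\R$.
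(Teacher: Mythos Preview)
Your proof is correct and follows essentially the same approach as the paper: both verify directly that $\pi(Iv)=\sqrt{-1}\,\pi(v)$, the paper by writing a generic element of $V_\R$ as $v+\overline{v}$ with $v\in V^{1,0}$ and computing, you by first establishing $\pi\circ I=\sqrt{-1}\,\pi$ on all of $V_\bC$ and then restricting to $V_\R$. The difference is purely cosmetic.
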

\begin{proof}
The complex structure operator on $V_\R=\{v+\overline{v}\mid v\in V^{1,0}\}$ is given by $I(v+\overline{v}) = \sqrt{-1}(v - \overline{v}) = \sqrt{-1}v + \overline{\sqrt{-1}v}$. Of course the projection maps $I(v+\overline{v})$ to $\sqrt{-1}v$.
\end{proof}

\begin{pr}
\label{z_1_equals_tori}
The category of complex tori is equivalent to the category of $\Z$-Hodge structures of weight $1$.
\end{pr}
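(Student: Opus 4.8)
The plan is to construct a pair of quasi-inverse functors, using Propositions \ref{Jacobi}, \ref{r_1_equals_cpx} and \ref{complex} as the main inputs; morphisms of complex tori are taken to be holomorphic group homomorphisms.

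\emph{The two functors.} Send a $\Z$-Hodge structure $V_\Z$ of weight $1$ to the complex torus $\mathcal J(V_\Z):=V^{1,0}/\pi(V_\Z)$ of Proposition \ref{Jacobi}. A morphism $\phi\colon V_\Z\to W_\Z$ extends to $\phi_\bC\colon V_\bC\to W_\bC$ preserving the bigradings; in particular $\phi_\bC$ commutes with the projections onto the $(1,0)$-parts, so the induced $\bC$-linear map $V^{1,0}\to W^{1,0}$ carries $\pi(V_\Z)$ into $\pi(W_\Z)$ and descends to a holomorphic homomorphism of tori. Conversely, a complex torus $T$ is a compact complex Lie group, hence by Proposition \ref{Verbitsky} of the form $W/\Lambda$ with $W$ a complex vector space and $\Lambda$ a discrete subgroup, necessarily of full rank $2\dim_\bC W$ by compactness; one recovers $W$ as the universal cover and $\Lambda$ as $\pi_1(T)$. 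Send $T$ to the $\Z$-module $\Lambda$: the complex structure of $W$ is an $\R$-Hodge structure of weight $1$ on $\Lambda\otimes\R=W$ by Proposition \ref{r_1_equals_cpx}, and the decomposition of $\Lambda\otimes\bC=W\otimes_\R\bC$ it provides, remembered together with the lattice $\Lambda$, is the desired $\Z$-Hodge structure. A holomorphic homomorphism $f\colon W/\Lambda\to W'/\Lambda'$ lifts to a $\bC$-linear map $\tilde f\colon W\to W'$ carrying $\Lambda$ into $\Lambda'$, whose restriction to $\Lambda$ is $\Z$-linear and intertwines the two complex structures, i.e. is a morphism of the $\Z$-Hodge structures.

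\emph{Quasi-inverseness.} Running the second functor after the first, $\mathcal J(V_\Z)=V^{1,0}/\pi(V_\Z)$ has universal cover $V^{1,0}$ and lattice $\pi(V_\Z)$, and $\pi$ restricted to $V_\Z$ is an isomorphism of $\Z$-modules onto $\pi(V_\Z)$ (it is injective since $\pi|_{V_\R}$ is). By Proposition \ref{complex} the map $\pi\colon V_\R\to V^{1,0}$ is complex linear, so transporting the complex structure of $V^{1,0}$ back along $\pi$ returns the original complex structure on $V_\R$, hence the original Hodge decomposition on $V_\Z$. Running the functors the other way, from $T=W/\Lambda$ the lattice $\Lambda$ with its Hodge structure gives, again by Proposition \ref{complex}, an isomorphism of complex vector spaces $\pi\colon\Lambda\otimes\R\xrightarrow{\ \sim\ }(\Lambda\otimes\R)^{1,0}$ sending $\Lambda$ onto $\pi(\Lambda)$, whence $(\Lambda\otimes\R)^{1,0}/\pi(\Lambda)\cong W/\Lambda=T$. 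Both families of isomorphisms are natural because on universal covers every map involved is the relevant linear map (the one induced by $\phi$, resp. $\tilde f$) and the identifications are themselves linear.

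\emph{The main obstacle.} Granting the earlier propositions, the only genuinely non-formal ingredients are the two structural facts about complex tori used above: that such a torus is canonically $W/\pi_1(T)$ with $W\cong\bC^n$ the universal cover (Proposition \ref{Verbitsky} together with the rank count forced by compactness), and, more importantly, that every holomorphic group homomorphism of complex tori lifts to a $\bC$-linear map of universal covers. For the latter one lifts $f$ through the covering maps to a holomorphic homomorphism $\tilde f\colon W\to W'$ with $\tilde f(0)=0$; since $W$ and $W'$ are vector groups, $\tilde f$ coincides with the exponential of its derivative at the origin, which is $\bC$-linear. I expect this lifting argument — and the bookkeeping needed to see that the induced maps on $\Hom$-sets are mutually inverse bijections — to be the step demanding the most care, the rest being immediate from Propositions \ref{Jacobi}, \ref{r_1_equals_cpx} and \ref{complex}.
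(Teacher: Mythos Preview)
Your proposal is correct and follows essentially the same approach as the paper: both build the functors from Proposition~\ref{Jacobi} in one direction and the universal-cover/lattice description in the other, invoke Proposition~\ref{r_1_equals_cpx} to pass between complex structures and weight~$1$ Hodge data, and use Proposition~\ref{complex} to verify the two constructions are mutually inverse. Your write-up is in fact more careful than the paper's on two points the paper leaves implicit---that the lift of a torus homomorphism to universal covers is genuinely $\bC$-linear, and that the resulting identifications are natural---so the ``main obstacle'' you flag is real but already handled by your argument.
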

\begin{proof}
The correspondence on objects in one direction is given by Proposition \ref{Jacobi}. Let us construct it in the backward direction.

Let $X$ be a complex torus. Then its universal cover $\widetilde{X}$ is a complex vector space, and the kernel of the projection $\ker(\widetilde{X} \to X)$ is a lattice which is of full rank if one consider $\widetilde{X}$ as an $\R$-vector space, that is, a $\Z$-module $V_\Z$ with complex structure operator on $V_\Z\otimes\R$. By Propositions \ref{r_1_equals_cpx} and \ref{k-k'} it is the same as $\Z$-Hodge structure of weight $1$. It follows from the Proposition \ref{complex} that these correspondences are mutually inverse.

Let $f \colon X \to Y$ be a map of complex tori. Its lift to the universal cover $\widetilde{f}\colon\widetilde{X}\to\widetilde{Y}$ is a $\bC$-linear map, mapping the lattice $\ker(\widetilde{X} \to X)$ to the lattice $\ker(\widetilde{Y} \to Y)$. By Propositions \ref{r_1_equals_cpx} and \ref{k-k'} this is a map of $\Z$-Hodge structures of weight~$1$.
\end{proof}

Another, in fact equivalent way to obtain a weight $1$ $\Z$-Hodge structure from the complex torus, is to take its first homology with integral coefficients.

\begin{defn}
A surjective morphism of complex tori with finite kernel is called an {\it isogeny}.
\end{defn}

\begin{defn}
For a category $C$ and a class of morphisms $W$ the {\it localization} $C[W^{-1}]$ is the category with the same class of objects and $$\Hom_{C[W^{-1}]}(X,Y) = \{X \xleftarrow{f_1} Z_1 \to Y_1 \xleftarrow{f_2} Z_2 \to \dots \xleftarrow{f_n} Z_n \to Y \mid f_1,f_2\dots,f_n\in W\}$$ up to suitable equivalence with obvious composition.
\end{defn}

Essential details of the definition of localization can be found in any text on categorical algebra, e.~g. \cite{KSch}. However, they are insufficient for our exposition. 

\begin{defn}
Localization of the category of complex tori by isogenies is called the {\it category of complex tori up to isogeny}.
\end{defn}

\begin{pr}
\label{q_1_equals_tori_isog}
The category of complex tori up to isogeny is equivalent to the category of $\Q$-Hodge structures of weight $1$.
\end{pr}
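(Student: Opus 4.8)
The plan is to transport the statement across the equivalence of Proposition~\ref{z_1_equals_tori} and then to analyze the localization entirely on the Hodge-theoretic side, where everything is governed by elementary commutative algebra. Working throughout with torsion-free $\Z$-Hodge structures (these are the ones that correspond to complex tori, via $V_\Z=\pi_1$), I would first identify the isogenies: lifting an isogeny $X\to Y$ to universal covers yields a $\bC$-linear isomorphism carrying the lattice $\pi_1(X)$ onto a finite-index sublattice of $\pi_1(Y)$ compatibly with the Hodge decompositions, so that under Proposition~\ref{z_1_equals_tori} isogenies correspond precisely to the class $\Sigma$ of morphisms of weight-$1$ $\Z$-Hodge structures that are injective with finite cokernel; equivalently, these are exactly the morphisms $s$ for which $s\otimes_\Z\Q$ is an isomorphism. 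Hence the category of complex tori up to isogeny is equivalent to the localization $\mathcal H_\Z[\Sigma^{-1}]$, where $\mathcal H_\Z$ (resp.\ $\mathcal H_\Q$) denotes the category of weight-$1$ $\Z$- (resp.\ $\Q$-) Hodge structures.

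The functor $F\colon\mathcal H_\Z\to\mathcal H_\Q$, $V_\Z\mapsto V_\Z\otimes_\Z\Q$ with the same bigrading (legitimate since $V_\Z\otimes\bC=(V_\Z\otimes\Q)\otimes_\Q\bC$, cf.\ Proposition~\ref{k-k'}), carries $\Sigma$ into isomorphisms, hence by the universal property of localization factors through a functor $\bar F\colon\mathcal H_\Z[\Sigma^{-1}]\to\mathcal H_\Q$; I claim $\bar F$ is an equivalence. Essential surjectivity is immediate: given a weight-$1$ $\Q$-Hodge structure $(V_\Q,\{V^{p,q}\})$, choose a full-rank lattice $V_\Z\subset V_\Q$ and keep the same splitting; then $F(V_\Z)=V_\Q$. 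Full faithfulness rests on two facts. First, $\Hom_{\mathcal H_\Q}(V_\Q,W_\Q)=\Hom_{\mathcal H_\Z}(V_\Z,W_\Z)\otimes_\Z\Q$: the right-hand side injects into the left because $\Hom$-groups of torsion-free $\Z$-Hodge structures are torsion-free, and it is onto because for $g\colon V_\Q\to W_\Q$ the subgroup $g(V_\Z)\subset W_\Q$ is finitely generated, hence contained in $\tfrac1N W_\Z$ for some $N$, so that $Ng$ restricts to an integral morphism. Second, $\Sigma$ admits a calculus of (right) fractions in $\mathcal H_\Z$: the Ore condition is supplied by fibre products $V_\Z\times_{Z_\Z}W_\Z=\{(v,w):f(v)=s(w)\}$ (for $f\colon V_\Z\to Z_\Z$ and $s\colon W_\Z\to Z_\Z$ with $s\in\Sigma$), which are again torsion-free weight-$1$ $\Z$-Hodge structures and whose first projection lies in $\Sigma$; and the cancellation condition holds since a morphism vanishing on a finite-index sublattice of its source is zero, the target being torsion-free.

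Granting the calculus of fractions, every morphism of $\mathcal H_\Z[\Sigma^{-1}]$ from $V_\Z$ to $W_\Z$ is represented by a single roof $V_\Z\xleftarrow{t}U_\Z\xrightarrow{g}W_\Z$ with $t\in\Sigma$, and $\Hom_{\mathcal H_\Z[\Sigma^{-1}]}(V_\Z,W_\Z)=\varinjlim_{t}\Hom_{\mathcal H_\Z}(U_\Z,W_\Z)$, the colimit taken over the filtered category of $\Sigma$-morphisms $t\colon U_\Z\to V_\Z$. Since every such $t$, with finite cokernel of exponent $N$, is refined by the multiplication map $[N]\colon V_\Z\to V_\Z$ --- concretely $t^{-1}\circ[N]\colon V_\Z\to U_\Z$ is a well-defined morphism with $t\circ(t^{-1}\circ[N])=[N]$ --- the maps $\{[N]\}_{N\ge1}$, ordered by divisibility, form a cofinal family, and the corresponding transition maps on $\Hom_{\mathcal H_\Z}(V_\Z,W_\Z)$ are multiplication by integers; the colimit is therefore $\Hom_{\mathcal H_\Z}(V_\Z,W_\Z)\otimes_\Z\Q$, which the first fact above identifies with $\Hom_{\mathcal H_\Q}(V_\Q,W_\Q)$, and one checks that $\bar F$ realizes exactly this identification. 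The hard part is this last step --- controlling the localized Hom-sets, i.e.\ showing that inverting isogenies tensors morphism groups with $\Q$ and does nothing more --- which is precisely where the remark following the definition of localization must be made precise; it ultimately reduces to the two facts above: closure of weight-$1$ $\Z$-Hodge structures under fibre products, and torsion-freeness of their $\Hom$-groups.
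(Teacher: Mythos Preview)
Your argument is correct, and considerably more thorough than what the paper actually does. The paper's own proof is a short sketch going in the opposite direction: it starts from a $\Q$-Hodge structure, chooses a basis to produce a lattice and hence a complex torus via Proposition~\ref{z_1_equals_tori}, observes that the isogeny class of the resulting torus is independent of the chosen basis (since two bases of the same $\Q$-vector space span commensurable lattices), and remarks that any morphism of $\Q$-Hodge structures can be cleared of denominators to become a morphism of $\Z$-Hodge structures. No calculus of fractions is invoked, and the identification of $\Hom$-sets in the localized category is left implicit.

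By contrast, you transport the question entirely to the Hodge side, identify isogenies with the class $\Sigma$ of morphisms inverted by $-\otimes_\Z\Q$, verify the Ore conditions for $\Sigma$ via fibre products, and then compute $\Hom_{\mathcal H_\Z[\Sigma^{-1}]}$ as a filtered colimit cofinally indexed by multiplication maps $[N]$, recovering $\Hom_{\mathcal H_\Z}\otimes\Q=\Hom_{\mathcal H_\Q}$. This buys you an honest proof that the induced functor on the localization is fully faithful --- a point the paper glosses over (and which, given the paper's own caveat after the definition of localization that ``essential details \dots\ are insufficient for our exposition'', it arguably never intended to address). The cost is the extra machinery; the paper's approach has the virtue of brevity and of making the inverse equivalence explicit on objects, but yours is the one that actually engages with what ``category of complex tori up to isogeny'' means.
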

\begin{proof}
Any $\Q$-Hodge structure may be made $\Z$-Hodge structure by taking a base $\{v_i\}$ of $V_\Q$ and setting $V_\Z=\Z\langle\{v_i\}\rangle$. Hence the objects of this category can be identified with complex tori by Proposition \ref{z_1_equals_tori}, although in a non-unique way. However, the isogeny class of a torus corresponding to $V_\Z$ is independent on choise of the base. Indeed, tori corresponging to bases $\{v_i\}$ and $\{Nv_i\}$, where $N$ is a nonzero integer, are isogenous by construction, and any two bases of a $\Q$-vector space can be made to span the same lattice by multiplying them by appropriate integers. Moreover, any morphism of $\Q$-Hodge structures may be made a morphism of $\Z$-Hodge structures in this way.
\end{proof}

\section{Hodge structures from CR geometric viewpoint}

\subsection{Reminder of CR and co-CR vector spaces}
We start this section with a reminder of definitions in CR geometry. For a more detailed exposition, see e.~g. \cite{MOP}.

\begin{defn}
A {\it CR vector space} $(V,H,I)$ is an $\R$-vector space $V$ together with a subspace $H\subseteq V$ (called {\it distinguished complex subspace}) and an operator $I\colon H\to H$ such that $I^2=-1$ (called {\it CR structure operator}). A {\it co-CR vector space} $(U,F,J)$ is an $\R$-vector space with a subspace $F\subseteq U$ (called {\it distinguished cocomplex subspace}) and an operator $J\colon U/F\to U/V$ such that $J^2=-1$ (called {\it co-CR structure operator}). The {\it (co-)CR linear map} of (co-)CR vector spaces is a linear map of underlying $\R$-vector spaces mapping distinguished (co)complex subspace to distinguished (co)complex subspace and whose restriction to distinguished complex subspace (map induced on quotients by distinguished cocomplex subspaces) is complex linear w.~r.~t. the (co-)CR structures operators.
\end{defn}

\begin{pr}
\label{example}
Suppose that $W$ is a $\bC$-vector space, and $V\subseteq W$ is an $\R$-vector subspace. Then $V$ carries a natural CR structure, and $W/V$ carries a natural co-CR structure. \end{pr}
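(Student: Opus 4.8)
The plan is to read off both structures directly from the ambient complex structure $I_W$ on $W$. For the CR structure on $V$, I would set $H = V \cap I_W(V)$, the maximal complex subspace of $W$ contained in $V$; since $I_W(H) = I_W(V) \cap I_W^2(V) = I_W(V) \cap V = H$, the restriction $I := I_W|_H$ is a well-defined operator $H \to H$ with $I^2 = -\mathrm{id}$, so $(V, H, I)$ is a CR vector space. For the co-CR structure on $W/V$, I would dualize this: take $F = V/(V \cap I_W(V)) \subseteq W/V$, i.e.\ the image of $V$ in $W/V$. The claim is that $I_W$ descends to an operator $J$ on $(W/V)/F = W/(V + I_W(V))$ with $J^2 = -\mathrm{id}$. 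This requires checking that $V + I_W(V)$ is $I_W$-invariant, which is immediate since $I_W(V + I_W(V)) = I_W(V) + I_W^2(V) = I_W(V) + V$; hence $I_W$ induces $J$ on the quotient, and $J^2 = -\mathrm{id}$ is inherited. So $(W/V, F, J)$ is a co-CR vector space.

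I would then make the word ``natural'' precise: if $\phi\colon W \to W'$ is a $\bC$-linear map of complex vector spaces and $V \subseteq W$, $V' \subseteq W'$ are $\R$-subspaces with $\phi(V) \subseteq V'$, then $\phi$ restricts to a CR-linear map $V \to V'$ and descends to a co-CR-linear map $W/V \to W'/V'$. The key point is that $\phi$ being $\bC$-linear means $\phi \circ I_W = I_{W'} \circ \phi$, so $\phi(V \cap I_W(V)) \subseteq V' \cap I_{W'}(V')$ and $\phi(V + I_W(V)) \subseteq V' + I_{W'}(V')$; complex linearity of the induced maps on $H$ and on $(W/V)/F$ then follows from the same intertwining identity. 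This shows the assignments $W \supseteq V \rightsquigarrow (V,H,I)$ and $W \supseteq V \rightsquigarrow (W/V, F, J)$ are functorial, which is what ``natural'' should mean here.

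There is essentially no hard analytic or geometric content; the only thing requiring a moment's care is keeping the two constructions genuinely dual to each other — the CR structure lives on the \emph{largest} complex subspace $V \cap I_W(V)$ of $V$, whereas the cocomplex subspace for $W/V$ is the image of $V$ and the co-CR operator lives on the quotient by the \emph{smallest} complex subspace $V + I_W(V)$ of $W$ containing $V$. I expect the main (very minor) obstacle to be bookkeeping: verifying $I_W$-invariance of $V \cap I_W(V)$ and of $V + I_W(V)$, and confirming that the operator $J$ on $W/(V+I_W(V))$ is well-defined independently of representatives. Both are one-line verifications using only that $I_W^2 = -\mathrm{id}$, so the proof is short.
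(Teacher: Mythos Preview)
Your argument is essentially the paper's own: it too sets $H = V \cap \sqrt{-1}V$ with the restricted complex structure, and for the co-CR side takes $F$ to be the projection of $\sqrt{-1}V$ into $W/V$, identifying $(W/V)/F$ with $W/\bC V$. There is one slip in your write-up, however: you describe $F$ as ``$V/(V\cap I_W(V)) \subseteq W/V$, i.e.\ the image of $V$ in $W/V$'', but the image of $V$ in $W/V$ is zero, and $V/(V\cap I_W(V))$ is not naturally a subspace of $W/V$ at all. What you want---and what your own equation $(W/V)/F = W/(V+I_W(V))$ shows you actually have in mind---is $F = (V+I_W(V))/V$, the image of $I_W(V)$ (equivalently $\sqrt{-1}V$) in $W/V$; this is isomorphic to $I_W(V)/(V\cap I_W(V))$, not to $V/(V\cap I_W(V))$. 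With that correction the proof goes through and agrees with the paper; your functoriality paragraph is extra content the paper does not bother to spell out.
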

\begin{proof}
The subspace $H=V\cap\sqrt{-1}V$ is invariant under multiplication by $\sqrt{-1}$ and is hence a $\bC$-vector subspace in the space $V$. The pair $(V,H)$ is a CR vector space.

Denote the projection of the space $\sqrt{-1}V$ to $W/V$ by $F$. Then the quotient $(W/V)/F$ is isomorphic to $W/\bC V$, hence carries a natural complex structure. The pair $(W/V,F)$ is a co-CR vector space.
\end{proof}

The complex structure operator is characterized by its $\sqrt{-1}$-eigensubspace, which leads to another 

\begin{defn}
A {\it CR vector space} $(V,H^{1,0})$ is an $\R$-vector space $V$ together with a subspace $H^{1,0}\subseteq V\otimes\bC$ such that $H^{1,0}\cap\overline{H^{1,0}}=\{0\}$. A {\it co-CR vector space} $(U,F^{1,0})$ is an $\R$-vector space $U$ together with a subspace $F^{1,0}\subseteq U\otimes\bC$ such that $\langle F^{1,0},\overline{F^{1,0}}\rangle=U\otimes\bC$.
\end{defn}

In the notation of the previous Definition, $H=(H^{1,0}\oplus\overline{H^{1,0}})^{\Gal(\bC:\R)}$ and $F=F^{1,0}\cup\overline{F^{1,0}}$. These two definitions of CR structures are obviously equivalent; to see that the definitions of co-CR structures are equivalent, note that $F^{1,0}$ may be reconstructed as a preimage of the subspace $(U/F)^{1,0}\subseteq(U/F)\otimes\bC$. 

We may see from this Definition that any CR or co-CR structure is isomorphic to an example from Proposition \ref{example}, however, strictly speaking, in a non-unique way.

\subsection{Bi-CR structures and Hodge structures of weight two}
Of course, if $(V,V^{1,0})$ is both CR and co-CR structure, then $V\otimes\bC=V^{1,0}\oplus\overline{V^{1,0}}$, and this is just a complex structure. But there exists a more appropriate notion of compatibility of a CR and a co-CR structure on the same $\R$-vector space.

\begin{defn}
We call by a {\it bi-CR vector space} $(V,H,F,I,J)$ a pair of CR structure $(V,H,I)$ and co-CR structure $(V,F,J)$ such that $H\oplus F=V$ is a direct sum decomposition and the projection $V/F\to H$ is complex linear w.~r.~t. the complex structure $J$ on the left and $I$ on the right. Alternatively, a {\it bi-CR vector space} $(V,H^{1,0},F^{1,0})$ is a pair of CR structure $(V,H^{1,0})$ and co-CR structure $(V,F^{1,0})$ such that $H^{1,0}\subseteq F^{1,0}$ and $F^{1,0}\cap\overline{H^{1,0}}=0$.
\end{defn}

\begin{pr}
\label{r_2_equals_bi-CR}
The category of bi-CR vector spaces is equivalent to the category of $\R$-Hodge structure of weight $2$.
\end{pr}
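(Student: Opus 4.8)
The plan is to exhibit, for an arbitrary $\R$-vector space $V_\R$, a natural bijection between $\R$-Hodge structures of weight $2$ on $V_\R$ and bi-CR structures on $V_\R$, and then check that this bijection is functorial, i.e. that a map of $\R$-vector spaces respects the Hodge data on both sides iff it is bi-CR linear. Recall that an $\R$-Hodge structure of weight $2$ is a splitting $V_\bC = V^{2,0}\oplus V^{1,1}\oplus V^{0,2}$ with $\overline{V^{2,0}}=V^{0,2}$ and $\overline{V^{1,1}}=V^{1,1}$; in particular $V^{1,1}$ is the complexification of a real subspace. So first I would, given such a Hodge structure, set $H^{1,0}=V^{2,0}$ and $F^{1,0}=V^{2,0}\oplus V^{1,1}$. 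Then $H^{1,0}\cap\overline{H^{1,0}}=V^{2,0}\cap V^{0,2}=0$ makes $(V_\R,H^{1,0})$ a CR vector space; $\langle F^{1,0},\overline{F^{1,0}}\rangle = \langle V^{2,0}\oplus V^{1,1}, V^{0,2}\oplus V^{1,1}\rangle = V_\bC$ makes it a co-CR vector space; and $H^{1,0}=V^{2,0}\subseteq V^{2,0}\oplus V^{1,1}=F^{1,0}$ together with $F^{1,0}\cap\overline{H^{1,0}} = (V^{2,0}\oplus V^{1,1})\cap V^{0,2}=0$ give exactly the second (eigenspace) form of the bi-CR axioms.

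Next I would construct the inverse. Given a bi-CR structure $(V_\R,H^{1,0},F^{1,0})$, define $V^{2,0}=H^{1,0}$, $V^{0,2}=\overline{H^{1,0}}$, and $V^{1,1}=F^{1,0}\cap\overline{F^{1,0}}$. The content to verify is that $V_\bC = V^{2,0}\oplus V^{1,1}\oplus V^{0,2}$ and that $V^{1,1}$ is defined over $\R$; the latter is immediate since $V^{1,1}$ is manifestly conjugation-invariant, and $\overline{V^{2,0}}=V^{0,2}$ by definition. For the direct sum decomposition, a short linear-algebra argument is needed from the two conditions $H^{1,0}\subseteq F^{1,0}$ and $F^{1,0}\cap\overline{H^{1,0}}=0$: one shows $F^{1,0} = H^{1,0}\oplus(F^{1,0}\cap\overline{F^{1,0}})$ (using $H^{1,0}\subseteq F^{1,0}$ to reduce the claim to inside $F^{1,0}$, where it follows from $F^{1,0}\cap\overline{H^{1,0}}=0$ and a dimension count via $\langle F^{1,0},\overline{F^{1,0}}\rangle = V_\bC$), and symmetrically $\overline{F^{1,0}}=\overline{H^{1,0}}\oplus(F^{1,0}\cap\overline{F^{1,0}})$; combining these with $\langle F^{1,0},\overline{F^{1,0}}\rangle=V_\bC$ yields the triple splitting. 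One then checks the two constructions are mutually inverse, which is formal once the formulas $V^{2,0}=H^{1,0}$, $F^{1,0}=V^{2,0}\oplus V^{1,1}$ are in place. I would also remark that under the dictionary between the two forms of the bi-CR definition, this matches the geometric picture: $H=V^{1,0}_{\mathrm{CR}}$ sits as a complement to $F$, and the induced complex structure on $V/F\cong H$ is the one from $V^{2,0}$, which is the earlier Proposition \ref{complex} applied inside the CR/co-CR setting.

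Finally, functoriality: a $\bC$-linear map $\phi\colon V_\bC\to W_\bC$ defined over $\R$ is a morphism of weight-$2$ Hodge structures iff $\phi(V^{p,q})\subseteq W^{p,q}$ for all $(p,q)$. Since $\overline{\phi(x)}=\phi(\overline x)$, the condition $\phi(V^{0,2})\subseteq W^{0,2}$ is equivalent to $\phi(V^{2,0})\subseteq W^{2,0}$, so the Hodge-morphism condition is equivalent to $\phi(H^{1,0}_V)\subseteq H^{1,0}_W$ and $\phi(V^{1,1})\subseteq W^{1,1}$, i.e.\ to $\phi(F^{1,0}_V)\subseteq F^{1,0}_W$ and $\phi(H^{1,0}_V)\subseteq H^{1,0}_W$ — which is precisely the definition of a bi-CR linear map in the eigenspace formulation (CR linearity is $\phi(H^{1,0})\subseteq H^{1,0}$, co-CR linearity is $\phi(F^{1,0})\subseteq F^{1,0}$). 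Together with the object-level bijection this gives the claimed equivalence of categories. The only genuinely non-formal step is the direct-sum decomposition $V_\bC=V^{2,0}\oplus V^{1,1}\oplus V^{0,2}$ from the bi-CR axioms; I expect that to be the main (though still elementary) obstacle, and I would handle it by the dimension-count argument sketched above rather than by manipulating explicit vectors.
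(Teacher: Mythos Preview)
Your construction is exactly the paper's: from a weight-$2$ Hodge structure you set $H^{1,0}=V^{2,0}$ and $F^{1,0}=V^{2,0}\oplus V^{1,1}$, and conversely you recover $V^{1,1}$ as $F^{1,0}\cap\overline{F^{1,0}}$ (which is the paper's $F\otimes\bC$). The paper then dismisses functoriality in one line; your treatment of morphisms is more explicit but says the same thing.

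The one point where you and the paper diverge is in justifying the splitting $V_\bC=H^{1,0}\oplus V^{1,1}\oplus\overline{H^{1,0}}$ on the way back. The paper runs the inverse in the \emph{real-subspace} formulation of a bi-CR structure: from the axiom $H\oplus F=V_\R$ one immediately gets $V_\bC=(H\otimes\bC)\oplus(F\otimes\bC)=H^{1,0}\oplus\overline{H^{1,0}}\oplus(F\otimes\bC)$, so no dimension count is needed. You instead work in the eigenspace formulation and aim to prove $F^{1,0}=H^{1,0}\oplus(F^{1,0}\cap\overline{F^{1,0}})$ by a dimension count. Be careful here: the eigenspace axioms as literally listed (namely $H^{1,0}\cap\overline{H^{1,0}}=0$, $F^{1,0}+\overline{F^{1,0}}=V_\bC$, $H^{1,0}\subseteq F^{1,0}$, $F^{1,0}\cap\overline{H^{1,0}}=0$) only yield the inequality $\dim H^{1,0}+\dim F^{1,0}\le\dim V_\bC$, whereas your count needs equality. (A concrete obstruction: on $V_\R=\R^2$ take $H^{1,0}=0$ and $F^{1,0}$ a complex line with $F^{1,0}\cap\overline{F^{1,0}}=0$; all four conditions hold but the three pieces sum to $0$.) The missing equality is precisely what the real axiom $H\oplus F=V_\R$ supplies. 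So either switch to the first formulation at this step, as the paper does, or explicitly invoke the equivalence of the two definitions to import $\dim H^{1,0}+\dim F^{1,0}=\dim V_\bC$ before running your count.
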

\begin{proof}
Let $V$ be an $\R$-Hodge structure of weight 2: $V\otimes\bC=V^{2,0}\oplus V^{1,1}\oplus V^{0,2}$. Then $(V,V^{2,0},V^{2,0}\oplus V^{1,1})$ is a bi-CR structure. The other way around, if $(V,H,F)$ is a bi-CR structure, then $V\otimes\bC = H^{1,0} \oplus (F\otimes\bC) \oplus \overline{H^{1,0}}$ is an $\R$-Hodge structure of weight 2. It can be easily seen that these correspondences are compatible with maps.
\end{proof}

Similarly, to a $\Z$-Hodge structure of weight 2 $V_\Z$ one can associate a torus with left invariant bi-CR structure $A_{V_\Z}=V_\R/V_\Z$. 

\begin{defn}
Let $V_\Z$ be a $\Z$-Hodge structure of weight 2. We call the quotient $A_{V_\Z} = V_\R / V_\Z$ with canonical left invariant bi-CR structure an {\it Abel torus} of $V_\Z$.
\end{defn}

\begin{pr}
\label{z_2_equals_bi-cr_tori}
The category of bi-CR tori is equivalent to the category of $\Z$-Hodge structures of weight 2.
\end{pr}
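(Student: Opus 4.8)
The plan is to combine Proposition~\ref{r_2_equals_bi-CR} (the linear equivalence between bi-CR vector spaces and $\R$-Hodge structures of weight $2$) with a torus-versus-lattice argument of exactly the same shape as the one used in Proposition~\ref{z_1_equals_tori}. So the statement should essentially be a corollary, provided one first fixes what a ``bi-CR torus'' and a ``morphism of bi-CR tori'' are: a real torus $T=\R^{2n}/\Gamma$ equipped with a left-invariant bi-CR structure, i.e.\ a bi-CR structure on the Lie algebra $\mathrm{Lie}(T)=\R^{2n}$, morphisms being Lie group homomorphisms whose differential is bi-CR linear.

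First I would set up the object-level correspondence. Given a $\Z$-Hodge structure $V_\Z$ of weight $2$, Proposition~\ref{r_2_equals_bi-CR} applied to $V_\R=V_\Z\otimes\R$ produces a bi-CR structure on the $\R$-vector space $V_\R$; since $V_\Z\subseteq V_\R$ is a lattice of full rank, the quotient $A_{V_\Z}=V_\R/V_\Z$ is a real torus, and it inherits the (left-invariant) bi-CR structure from $V_\R$ — this is precisely the Abel torus just defined. Conversely, given a bi-CR torus $T$, its universal cover $\widetilde T$ is a real vector space carrying a left-invariant, hence translation-invariant and thus constant, bi-CR structure, and $\Gamma=\ker(\widetilde T\to T)$ is a full-rank lattice. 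Choosing a $\Z$-basis of $\Gamma$ identifies it with a $\Z$-module $V_\Z$ with $V_\Z\otimes\R=\widetilde T$, and Proposition~\ref{r_2_equals_bi-CR} turns the bi-CR structure on $\widetilde T$ into an $\R$-Hodge structure of weight $2$ on $V_\R$, whose $(p,q)$-pieces are $H^{1,0}$, $F_\bC/H^{1,0}$-preimage, $\overline{H^{1,0}}$ as in the proof of that proposition; this is a $\Z$-Hodge structure because the lattice $V_\Z$ is built in. One checks these two constructions are mutually inverse exactly as in Proposition~\ref{z_1_equals_tori}: the only subtlety is that the bi-CR structure on $\widetilde T$ really is constant, which follows because left translations on an abelian Lie group act trivially on the Lie algebra.

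Next I would handle morphisms. A homomorphism $f\colon T\to T'$ of bi-CR tori lifts to a linear map $\widetilde f\colon\widetilde T\to\widetilde T'$ carrying $\Gamma$ into $\Gamma'$; its differential being bi-CR linear is, via Proposition~\ref{r_2_equals_bi-CR}, the same as saying $\widetilde f$ is a morphism of the associated $\R$-Hodge structures, and since it sends $V_\Z$ into $V'_\Z$ it is a morphism of $\Z$-Hodge structures. Conversely a morphism of $\Z$-Hodge structures is an $\R$-linear map respecting the Hodge decomposition and the lattices, hence (again by Proposition~\ref{r_2_equals_bi-CR}) a bi-CR linear map of the covers carrying $\Gamma$ to $\Gamma'$, which descends to a homomorphism of the tori. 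Functoriality is immediate, and faithfulness/fullness follow because passing to universal covers and to Lie algebras are faithful operations on connected abelian Lie groups and the bijection on $\Hom$-sets has just been exhibited.

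The main obstacle, such as it is, is bookkeeping rather than mathematics: one must be careful that ``left-invariant bi-CR structure'' on a torus is the right notion — it is automatically a genuinely constant structure because the group is abelian, but this should be stated explicitly so that descent of the bi-CR structure from $\widetilde T$ to $T$, and its recovery going the other way, are unambiguous. A secondary point worth a sentence is that the equivalence of Proposition~\ref{r_2_equals_bi-CR} is natural enough that it commutes with the forgetful functors to $\R$-vector spaces, so that compatibility with lattices (the only extra datum distinguishing the $\Z$-case from the $\R$-case) is preserved in both directions. Once these are pinned down, the proof is a verbatim adaptation of Proposition~\ref{z_1_equals_tori} with ``complex structure'' replaced by ``bi-CR structure'' and Proposition~\ref{r_1_equals_cpx} replaced by Proposition~\ref{r_2_equals_bi-CR}.
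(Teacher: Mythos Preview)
Your proposal is correct and follows the same approach as the paper: the paper's own proof is a single sentence stating that, much like Proposition~\ref{z_1_equals_tori}, the result follows from Propositions~\ref{r_2_equals_bi-CR} and~\ref{k-k'}. You have simply unpacked this into the explicit object-level and morphism-level correspondences; the only cosmetic difference is that where you spell out the lattice-versus-$\R$-structure passage by hand, the paper abbreviates it as an invocation of Proposition~\ref{k-k'} with $k=\Z$, $k'=\R$.
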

\begin{proof}
Much like the proof of the Proposition \ref{z_1_equals_tori}, this proof follows from Propositions \ref{r_2_equals_bi-CR} and \ref{k-k'}.
\end{proof}

\subsection{The canonical embedding}

\begin{defn}
Let $V_\R$, $V_\R\otimes\bC=V^{2,0}\oplus V^{1,1}\oplus V^{0,2}$ be an $\R$-Hodge structure of weight 2. Denote by $\varpi$ the projection $V^{2,0}\oplus V^{0,2}\to V^{2,0}$, and by $\imath$ the embedding $(V^{1,1})^{\Gal(\bC:\R)}\to V^{1,1}$. We call the map $$\gamma=\varpi|_{(V^{2,0} \oplus V^{0,2})^{\Gal(\bC:\R)}}\oplus\imath\colon(V^{2,0} \oplus V^{0,2})^{\Gal(\bC:\R)}\oplus(V^{1,1})^{\Gal(\bC:\R)}=V_\R\to V^{2,0} \oplus V^{1,1}$$ the {\it canonical embedding}. 
\end{defn}

\begin{pr}The canonical embedding is CR linear w.~r.~t. the usual complex structure on $V^{2,0}\oplus V^{1,1}$ considered as CR structure.
\end{pr}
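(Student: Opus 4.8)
The plan is to unwind the definition of a CR-linear map and to reduce the only non-formal part to Proposition~\ref{complex}. First I would make explicit the two CR structures involved. On the target $V^{2,0}\oplus V^{1,1}$ the distinguished complex subspace is the whole space and the CR structure operator is multiplication by $\sqrt{-1}$; this is what ``the usual complex structure considered as a CR structure'' means. On the source $V_\R$ I take the CR part of the bi-CR structure attached to the weight-two Hodge structure by Proposition~\ref{r_2_equals_bi-CR}: its distinguished complex subspace is $H=(V^{2,0}\oplus V^{0,2})^{\Gal(\bC:\R)}\subseteq V_\R$, and the CR structure operator $I$ on $H$ is the one whose $\sqrt{-1}$-eigenspace inside $H\otimes\bC=V^{2,0}\oplus V^{0,2}$ is $V^{2,0}$.

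Next I would check the two conditions in the definition of CR linearity. The first --- that $\gamma$ carries the distinguished complex subspace of the source into that of the target --- is automatic, since the distinguished complex subspace of $V^{2,0}\oplus V^{1,1}$ is all of $V^{2,0}\oplus V^{1,1}$ and $\gamma$ maps into this space by construction. The second --- that $\gamma|_H$ is complex linear for the two CR structure operators --- is the substantive point. By the definition of $\gamma$, its restriction to the summand $H=(V^{2,0}\oplus V^{0,2})^{\Gal(\bC:\R)}$ of $V_\R$ is exactly $\varpi|_H$, i.e. the projection $V^{2,0}\oplus V^{0,2}\to V^{2,0}$ restricted to the Galois-invariant part. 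Now $V^{2,0}\oplus V^{0,2}$ with $\overline{V^{2,0}}=V^{0,2}$ is the complexification of an $\R$-Hodge structure of weight one on $H$ with $(1,0)$-piece $V^{2,0}$, and the complex structure this weight-one structure induces on $H$ is precisely the operator $I$ above. Hence Proposition~\ref{complex}, applied to this weight-one Hodge structure, asserts exactly that $\varpi|_H\colon H\to V^{2,0}\subseteq V^{2,0}\oplus V^{1,1}$ is complex linear, which is the claim. (Explicitly, for $v\in V^{2,0}$ one has $I(v+\overline v)=\sqrt{-1}v+\overline{\sqrt{-1}v}$, and $\gamma$ sends this to $\sqrt{-1}v=\sqrt{-1}\,\gamma(v+\overline v)$.)

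I do not expect a real obstacle: once the two CR structures are pinned down, the first defining condition is vacuous and the second is a verbatim instance of Proposition~\ref{complex}. The only care needed is bookkeeping --- specifying that $V_\R$ carries the CR structure coming from its weight-two Hodge structure via Proposition~\ref{r_2_equals_bi-CR}, and observing that the restriction of that CR structure operator to $H$ coincides with the weight-one complex structure on $(V^{2,0}\oplus V^{0,2})^{\Gal(\bC:\R)}$ appearing in Proposition~\ref{complex}. If one wishes, one may also note in passing that $\gamma$ identifies the CR structure on $V_\R$ with the one induced on $\gamma(V_\R)$ by the ambient complex space as in Proposition~\ref{example}, since $\gamma(V_\R)\cap\sqrt{-1}\,\gamma(V_\R)=V^{2,0}=\gamma(H)$; but this is more than the statement requires.
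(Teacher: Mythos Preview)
Your proposal is correct and is essentially the paper's argument: both identify the distinguished complex subspace of $V_\R$ as $(V^{2,0}\oplus V^{0,2})^{\Gal(\bC:\R)}$ and verify that $\varpi$ sends $I(v+\overline v)$ to $\sqrt{-1}v$. The only difference is packaging---you cast this computation as an instance of Proposition~\ref{complex} applied to the auxiliary weight-one structure on $H$, whereas the paper simply writes out $I(v+\overline v)=\sqrt{-1}(v-\overline v)\mapsto\sqrt{-1}v$ directly---but the content is identical.
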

\begin{proof}
The proof is more or less straightforward. The distinguished complex subspace of the CR space $V_\R$ is the subspace $(V^{2,0} \oplus V^{0,2})^{\Gal(\bC:\R)}$ and consists of vectors $v+\overline{v}$ for $v\in V^{2,0}$ (moreover, each vector in the distinguished complex subspace of $V_\R$ can be written in such way uniquely). The CR structure operator is given by $I(v+\overline{v}) = \sqrt{-1}(v-\overline{v})$. The projection $\varpi$ maps $v+\overline{v}$ to $v\in V^{2,0}$, and $Iv$ to $\sqrt{-1}v$. Hence the sum $\varpi\oplus\imath$ is CR linear.
\end{proof}

\begin{pr}
\label{iso}
Consider the induced CR structure on the space $\gamma(V_\R)\subseteq V^{2,0} \oplus V^{1,1}$. W.~r.~t. this CR structure on the image, the canonical embedding is a CR isomorphism.
\end{pr}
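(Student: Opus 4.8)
The plan is to make the image $\gamma(V_\R)$ explicit as an $\R$-subspace of the complex vector space $W=V^{2,0}\oplus V^{1,1}$, to read off the CR structure it inherits from Proposition \ref{example}, and to check that $\gamma$ carries the CR data of $V_\R$ precisely onto it. First I would note that $\gamma$ is injective: if $v+u=0$ in $W$ with $v\in V^{2,0}$ and $u\in(V^{1,1})^{\Gal(\bC:\R)}$, then $v=0$ and $u=0$ since the summands $V^{2,0}$ and $V^{1,1}$ of the weight-two decomposition meet only in $0$, and hence the corresponding vector $v+\overline{v}+u\in V_\R$ vanishes. Thus $\gamma(V_\R)$ is exactly the real subspace $\{v+u\mid v\in V^{2,0},\ u\in(V^{1,1})^{\Gal(\bC:\R)}\}$ of $W$, and $\gamma$ is a bijection onto it.

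Next I would identify the distinguished complex subspace of the induced CR structure on $\gamma(V_\R)$, which by Proposition \ref{example} is $\gamma(V_\R)\cap\sqrt{-1}\,\gamma(V_\R)$, and the CR structure operator on it is the restriction of multiplication by $\sqrt{-1}$ in $W$. Since $\sqrt{-1}\,\gamma(V_\R)=\{v+u\mid v\in V^{2,0},\ u\in\sqrt{-1}(V^{1,1})^{\Gal(\bC:\R)}\}$ and $V^{2,0}\cap V^{1,1}=0$, any element of the intersection must have its $V^{1,1}$-component in $(V^{1,1})^{\Gal(\bC:\R)}\cap\sqrt{-1}(V^{1,1})^{\Gal(\bC:\R)}$; but a vector fixed by complex conjugation and also equal to $\sqrt{-1}$ times a conjugation-fixed vector is negated by conjugation, so this intersection is $0$. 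Hence $\gamma(V_\R)\cap\sqrt{-1}\,\gamma(V_\R)=V^{2,0}$, and this is visibly $\gamma((V^{2,0}\oplus V^{0,2})^{\Gal(\bC:\R)})$, the image of the distinguished complex subspace of $V_\R$. So $\gamma$ does send the distinguished complex subspace of $V_\R$ onto the distinguished complex subspace of $\gamma(V_\R)$.

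Finally I would check complex linearity on these distinguished subspaces: the induced CR operator there is multiplication by $\sqrt{-1}$ restricted to the complex subspace $V^{2,0}\subseteq W$, i.e.\ the standard complex structure on $V^{2,0}$, and the preceding Proposition already records that $\gamma$ restricted to $(V^{2,0}\oplus V^{0,2})^{\Gal(\bC:\R)}$ equals $\varpi$ and sends $I(v+\overline{v})=\sqrt{-1}(v-\overline{v})$ to $\sqrt{-1}v=\sqrt{-1}\varpi(v+\overline{v})$. So $\gamma$ restricts to a $\bC$-linear isomorphism between the distinguished complex subspaces; its inverse is therefore CR linear as well, and $\gamma$ is a CR isomorphism onto $\gamma(V_\R)$. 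The only step needing genuine care is the computation $\gamma(V_\R)\cap\sqrt{-1}\,\gamma(V_\R)=V^{2,0}$ — showing the $V^{1,1}$-part contributes nothing and that what survives is literally $\gamma$ of the distinguished complex subspace of $V_\R$; everything else is bookkeeping with the explicit real forms.
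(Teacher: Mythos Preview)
Your argument is correct and follows essentially the same route as the paper: both identify the induced distinguished complex subspace $\gamma(V_\R)\cap\sqrt{-1}\,\gamma(V_\R)$ by exploiting $V^{2,0}\cap V^{1,1}=0$ and the vanishing of $(V^{1,1})^{\Gal(\bC:\R)}\cap\sqrt{-1}(V^{1,1})^{\Gal(\bC:\R)}$, and then invoke the previous Proposition to match it with the image of the distinguished complex subspace of $V_\R$. Your version is somewhat more explicit about injectivity of $\gamma$ and the compatibility of the CR operators, which the paper leaves implicit.
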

\begin{proof}
One has $\gamma(V_\R)\cap\sqrt{-1}\gamma(V_\R) = (V^{2,0} \oplus (V^{1,1})^{\Gal(\bC:\R)})\cap(\sqrt{-1}V^{2,0} \oplus \sqrt{-1}(V^{1,1})^{\Gal(\bC:\R)}) = (V^{2,0}\cap\sqrt{-1}V^{2,0}) \oplus (V^{2,0}\cap\sqrt{-1}(V^{1,1})^{\Gal(\bC:\R)}) \oplus ((V^{1,1})^{\Gal(\bC:\R)}\cap\sqrt{-1}V^{2,0}) \oplus ((V^{1,1})^{\Gal(\bC:\R)}\cap\sqrt{-1}(V^{1,1})^{\Gal(\bC:\R)})$. The first summand equals $V^{2,0}$ as it is complex vector subspace. The second and third summand vanish, because $V^{2,0} \cap V^{1,1} = \{0\}$ by definition of Hodge structure. The fourth summand vanishes because for any vector $u$ in this summand one has $u = -\sqrt{-1}\sqrt{-1}u = -\sqrt{-1}\overline{\sqrt{-1}u} = -\sqrt{-1}(-\sqrt{-1})\overline{u} = -\overline{u} = -u$. Therefore the distinguished complex subspace in $\gamma(V_\R)$ in induced CR structure equals $V^{2,0}$. However, the image of distinguished complex subspace in $V_\R$ is precisely $V^{2,0}$ by the previous Proposition.
\end{proof}

\section{Hodge Cousin groups}
The Abel tori, defined in the previous Section, are generalizations of Abel--Jacobi varieties for $\Z$-Hodge structures of weight 1 to $\Z$-Hodge structures of weight 2. In the present section we propose another generalization of Abel--Jacobi varieties, which works for $\Z$-Hodge structures of any weight.

\begin{defn}
Let $V_\Z$ be a $\Z$-Hodge structure of weight $n$. We call {\it Jacobi group} the complex Lie group $$J_{V_\Z} = \frac{\oplus_{p \leq q}V^{p,q}}{\pi(V_\Z)},$$ where $\pi$ denotes the projection of $V_\bC$ onto $\oplus_{p \leq q}V^{p,q}$ along the subspace $\oplus_{p>q}V^{p,q}$.
\end{defn}

\begin{pr}
\label{Euler}
Let $W_\Z$ be a weight 0 $\Z$-Hodge structure. Then $J_{W_\Z}$ is isomorphic to a product of several copies of $\GL(1,\bC)$.
\end{pr}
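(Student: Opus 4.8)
The plan is to observe that a weight-$0$ Hodge structure carries no nontrivial decomposition, so the Jacobi group degenerates to the very concrete quotient $W_\bC/W_\Z$, which can then be identified with a power of $\GL(1,\bC)$ by the complex exponential.

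First I would unwind the definitions. By the convention $p,q\geq 0$ in the definition of a $k$-Hodge structure, the only pair $(p,q)$ with $p+q=0$ and $p,q\geq 0$ is $(0,0)$, so $W_\bC=W^{0,0}$. Consequently the index set $\{p\leq q\}$ consists of the single pair $(0,0)$, the complementary set $\{p>q\}$ is empty, and the projection $\pi$ appearing in the definition of $J_{W_\Z}$ is simply the identity of $W_\bC$. Hence $J_{W_\Z}=W_\bC/W_\Z$, where $W_\Z$ is understood via its image in $W_\bC=W_\Z\otimes_\Z\bC$; note that the torsion subgroup of $W_\Z$, if present, maps to $0$ and plays no role.

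Next, picking a $\Z$-basis $e_1,\dots,e_r$ of the free part of $W_\Z$, the same vectors form a $\bC$-basis of $W_\bC$, so the inclusion $W_\Z\hookrightarrow W_\bC$ is identified with the standard inclusion $\Z^r\hookrightarrow\bC^r$, and therefore $J_{W_\Z}\cong\bC^r/\Z^r\cong(\bC/\Z)^r$. Finally, the map $\bC\to\bC^\ast=\GL(1,\bC)$, $z\mapsto\exp(2\pi\sqrt{-1}\,z)$, is a surjective holomorphic homomorphism with kernel exactly $\Z$, hence descends to a biholomorphic isomorphism of complex Lie groups $\bC/\Z\xrightarrow{\ \sim\ }\GL(1,\bC)$; taking its $r$-fold product yields $J_{W_\Z}\cong\GL(1,\bC)^r$.

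There is essentially no hard step here: the only point requiring a word of care is that every identification used is holomorphic with holomorphic inverse — automatic for the $\bC$-linear coordinate choice and for $\exp$ — so that the resulting isomorphism is genuinely one of complex Lie groups rather than merely of abstract or real Lie groups, and that one need not worry about torsion in $W_\Z$ since it dies in $W_\bC$.
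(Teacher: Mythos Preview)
Your proof is correct and follows essentially the same route as the paper's: both reduce to $J_{W_\Z}=W^{0,0}/W_\Z$, choose an integral basis to split this as $\oplus_i(\bC/\Z)$, and identify each factor with $\GL(1,\bC)$. Your version is simply more explicit about the exponential map, holomorphicity, and possible torsion in $W_\Z$, none of which the paper addresses.
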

\begin{proof}
One has $W^{0,0} = W_\bC = W_\Z\otimes\bC$. Pick up an integral basis $w_i$ for the lattice $W_\Z$, it determines a decomposition $W^{0,0}=\oplus_i\bC w_i$. Now one has $J_{W_\Z} = \frac{\oplus_{p \leq q}W^{p,q}}{\pi(W_\Z)} = \frac{W^{0,0}}{W_\Z} = \frac{\oplus_i\bC w_i}{\oplus_i\Z w_i} = \oplus_i(\bC/\Z) = \oplus_i\GL(1,\bC)$.
\end{proof}

\begin{defn}[L.~Ornea, M.~Verbitsky, V.~Vuletescu \cite{OVV}]
The Cousin group which is of the form $J_{V_\Z}$ for some $\Z$-Hodge structure $V_\Z$ is called {\it Hodge Cousin group}.
\end{defn}

\begin{pr}
\label{criterion}
The complex Lie group $J_{V_\Z}$ is Cousin group if and only if the Hodge structure $V_\Z$ does not admit a nontrivial quotient Hodge structure of weight $0$.
\end{pr}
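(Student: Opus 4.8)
The plan is to reduce the claim, via Proposition~\ref{P}, to a statement about the characters of $J_{V_\Z}$, and then to identify those characters with weight~$0$ quotient Hodge structures of $V_\Z$. Write $W:=\bigoplus_{p\leq q}V^{p,q}$ and $\Lambda:=\pi(V_\Z)$, so that $J_{V_\Z}=W/\Lambda$. Since $\ker\pi=\bigoplus_{p>q}V^{p,q}$ is a complex subspace whose conjugate $\bigoplus_{p<q}V^{p,q}$ meets it only in $0$, the map $\pi$ is injective on $V_\R=V_\bC^{\Gal(\bC:\R)}$; hence $\Lambda$ is discrete and $J_{V_\Z}$ is a connected commutative complex Lie group. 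By Proposition~\ref{P} it is a Cousin group if and only if it admits no nontrivial holomorphic homomorphism to $\GL(1,\bC)$.

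I would then describe those homomorphisms explicitly. As $W$ is simply connected, every holomorphic homomorphism $W\to\GL(1,\bC)$ has the form $\exp(2\pi\sqrt{-1}\,\ell)$ for a unique $\bC$-linear functional $\ell\colon W\to\bC$, and it descends to $W/\Lambda$ precisely when $\ell(\Lambda)\subseteq\Z$, the descended character being trivial exactly when $\ell=0$. Composing $\ell$ with the surjection $\pi$ identifies the nontrivial characters of $J_{V_\Z}$ with the nonzero $\bC$-linear functionals $m\colon V_\bC\to\bC$ which vanish on $\bigoplus_{p>q}V^{p,q}$ and satisfy $m(V_\Z)\subseteq\Z$.

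The crux is to convert such an $m$ into a weight~$0$ quotient Hodge structure of $V_\Z$, and conversely. The integrality condition $m(V_\Z)\subseteq\Z$ forces $m$ to be real on $V_\R$, so $m(\overline v)=\overline{m(v)}$ for all $v$; since $m$ kills $\bigoplus_{p>q}V^{p,q}$ and $\overline{\bigoplus_{p>q}V^{p,q}}=\bigoplus_{p<q}V^{p,q}$, it kills the latter as well, and therefore factors through the projection of $V_\bC$ onto its diagonal part $\bigoplus_p V^{p,p}$. Put $K:=\ker(m|_{V_\Z})$. Because a nonzero $\bC$-linear functional cannot vanish on the $\bC$-spanning set $V_\Q$, the restriction $m|_{V_\Z}$ is nonzero, so $U_\Z:=V_\Z/K$ is a nonzero quotient $\Z$-module and $K\otimes\bC=\ker m$ is a $\Gal(\bC:\R)$-stable subspace; declaring the image of $V^{p,q}$ in $U_\bC=V_\bC/\ker m$ to be the component $U^{p,q}$ equips $U_\Z$ with a Hodge structure all of whose nonzero components lie on the diagonal $p=q$, that is, with a nontrivial quotient Hodge structure of weight~$0$ (after the evident Tate twist, $V_\Z$ being pure). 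For the converse, given a nontrivial weight~$0$ quotient $r\colon V_\Z\twoheadrightarrow U_\Z$, the map $r_\bC$ annihilates $V^{p,q}$ for every $p\neq q$, in particular for $p>q$; composing $r$ with any nonzero homomorphism $U_\Z\to\Z$ then yields a functional $m$ as above, nonzero because $r$ is surjective. Combining the two passages gives the equivalence.

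I expect the only genuine obstacle to be the step just sketched: one must use the integrality of $m$ in an essential way, since it is exactly the hypothesis $m(V_\Z)\subseteq\Z$ --- equivalently, the reality of $m$ on $V_\R$ --- that propagates the vanishing from $\bigoplus_{p>q}V^{p,q}$ to the whole off-diagonal and so guarantees that the datum induced on $U_\Z$ satisfies the conjugation symmetry $\overline{U^{p,q}}=U^{q,p}$, hence defines an honest Hodge structure (of weight~$0$, in the sense relevant to the definition of Hodge Cousin group) rather than merely a filtered lattice. The remaining ingredients --- discreteness of $\Lambda$, the lift of a character to a linear functional, and the bookkeeping of the bijection --- are routine.
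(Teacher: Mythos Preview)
Your argument is correct and follows essentially the same route as the paper: reduce via Proposition~\ref{P} to the existence of a nontrivial character, lift it to a linear functional on the universal cover, and identify its restriction to $V_\Z$ with a surjection onto a weight~$0$ Hodge structure (and conversely, via Proposition~\ref{Euler}). Your write-up is in fact more careful than the paper's own proof, which simply asserts that the restriction of the lifted functional to $V_\Z$ is a morphism onto a weight~$0$ structure without spelling out the step you isolate as the crux---that integrality forces reality on $V_\R$ and hence vanishing on all of $\bigoplus_{p\neq q}V^{p,q}$.
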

\begin{proof}
Suppose that $J_{V_\Z}$ is a Cousin group. Let $V_\Z \to W_\Z$ be a surjective map onto a $\Z$-Hodge structure of weight $0$. Then the induced homomorphism of complex Lie groups $J_{V_\Z}\to J_{W_\Z} = \oplus_i\GL(1,\bC)$, determines, by Proposition \ref{Euler}, a collection of characters of $J_{V_\Z}$. By Propostion \ref{HahnBanach} all representations of the Cousin group $J_{V_\Z}$ are trivial, hence the $\Z$-Hodge structure $W_\Z$ is trivial. 

Suppose that $J_{V_\Z}$ is not a Cousin group. Since it is commutative, by Proposition \ref{P}, it admits a nontrivial character $J_{V_\Z}\to\bC/\Z$. The lift of this map to the universal cover is a surjective map $V_\bC\to\bC$. Its restriction to $V_\Z \subset V_\bC$ is a surjective map of $V_\Z$ to $\Z \subset \bC$ a $\Z$-Hodge structure of weight $0$.
\end{proof}

\begin{pr}
Let $V_\Z$ be a $\Z$-Hodge structure of weight 2. Then the canonical embedding $\gamma$ descends to an embedding of the Abel torus $A_{V_\Z}$ to the Jacobi group $J_{V_\Z}$.
\end{pr}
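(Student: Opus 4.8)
The plan is to show that $\gamma$ maps the lattice $V_\Z$ into $\pi(V_\Z)$ and then check that the induced map on quotients is injective by comparing kernels. First I would recall the two relevant decompositions: for the Abel torus we use $V_\bC = V^{2,0}\oplus V^{1,1}\oplus V^{0,2}$ with the canonical embedding $\gamma\colon V_\R \to V^{2,0}\oplus V^{1,1}$ sending $v+\overline v \mapsto v$ (for $v\in V^{2,0}$) and acting as the identity on $(V^{1,1})^{\Gal(\bC:\R)}$; for the Jacobi group of weight $2$ we have, in the notation $\oplus_{p\le q}V^{p,q}$, precisely $\oplus_{p\le q}V^{p,q} = V^{0,2}\oplus V^{1,1}$ (since the pairs $(p,q)$ with $p+q=2$ and $p\le q$ are $(0,2)$ and $(1,1)$), with $\pi$ the projection along $V^{2,0}$. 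Note that $\varpi|_{(V^{2,0}\oplus V^{0,2})^{\Gal}}$ is a real-linear isomorphism onto $V^{2,0}$, whereas $\pi$ restricted to the same real subspace is a real-linear isomorphism onto $V^{0,2}$, and these two are interchanged by complex conjugation; so $\gamma$ and $\pi$ do not literally have the same target, but the targets $V^{2,0}\oplus V^{1,1}$ and $V^{0,2}\oplus V^{1,1}$ are identified by the conjugate-linear isomorphism that is the identity on $V^{1,1}$ and conjugation on the first summand. The cleanest formulation is therefore: composing $\gamma$ with this conjugate-linear identification, one obtains exactly $\pi|_{V_\R}$, so $\gamma$ and $\pi|_{V_\R}$ literally agree as maps $V_\R \to \oplus_{p\le q}V^{p,q}$ after this canonical identification.

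Once that identification is in place, the descent is immediate: $\gamma(V_\Z) = \pi(V_\Z)$ inside $\oplus_{p\le q}V^{p,q}$, so $\gamma$ induces a map $A_{V_\Z} = V_\R/V_\Z \to (\oplus_{p\le q}V^{p,q})/\pi(V_\Z) = J_{V_\Z}$. For injectivity I would compute the kernel: a point of $A_{V_\Z}$ maps to zero iff its lift $x\in V_\R$ satisfies $\pi(x)\in\pi(V_\Z)$, i.e. $\pi(x) = \pi(\lambda)$ for some $\lambda\in V_\Z$, i.e. $x - \lambda \in \ker(\pi|_{V_\R})$. But $\ker(\pi|_{V_\bC}) = V^{2,0}$ and $V^{2,0}\cap V_\R = \{0\}$ because a nonzero element of $V^{2,0}\cap V_\R$ would have to equal its own conjugate, forcing it into $V^{0,2}$, contradicting $V^{2,0}\cap V^{0,2} = 0$. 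Hence $x = \lambda \in V_\Z$, so the original point of $A_{V_\Z}$ was already zero, proving injectivity. Holomorphicity of the map follows because it is induced by the complex-linear map $\pi$ on the covers (or, equivalently, one invokes Proposition \ref{iso}, which already says $\gamma$ is a CR-isomorphism onto its image and in particular complex-linear on the distinguished complex subspace $V^{2,0}$, matching the complex structure of $J_{V_\Z}$).

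The main obstacle — really the only subtle point — is the bookkeeping in the previous paragraph's first step: the naive reading of the statement has $\gamma$ landing in $V^{2,0}\oplus V^{1,1}$ while $J_{V_\Z}$ is built from $V^{0,2}\oplus V^{1,1}$, and one must be careful that the conjugate-linear identification used to reconcile them is the correct one and is compatible with both the integral lattices and the complex structures. I would state this identification explicitly as an isomorphism of complex Lie groups $J_{V_\Z} \cong (V^{2,0}\oplus V^{1,1})/\gamma(V_\Z)$ (note conjugation on the $V^{2,0}$ factor is $\bC$-antilinear, but since we may equally well define $J_{V_\Z}$ using $\oplus_{p\ge q}$ instead of $\oplus_{p\le q}$ — the choice is a convention — there is no genuine loss), and then the embedding is literally the descent of $\gamma$. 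Everything else is the short kernel computation above, which is entirely parallel to the proof of Proposition \ref{iso} and of Proposition \ref{Jacobi}.
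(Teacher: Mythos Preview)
Your approach is essentially the paper's: the core observation is that $\gamma|_{V_\R}$ and $\pi|_{V_\R}$ coincide as maps into $V^{2,0}\oplus V^{1,1}$, so $\gamma(V_\Z)=\pi(V_\Z)$ and the map descends. You are in fact more careful than the paper about the $p\le q$ versus $p\ge q$ convention: the paper's own proof silently treats $\pi$ as the projection onto $V^{2,0}\oplus V^{1,1}$ (it calls $\pi(v)$ the ``$((2,0)+(1,1))$-component of $v$''), which then matches $\gamma$ literally without any conjugate-linear identification---so the mismatch you worried about is really a harmless inconsistency in the paper's definition of $J_{V_\Z}$, not something requiring the antilinear fix you propose. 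Your explicit kernel computation for injectivity is correct but not needed: once $\gamma$ is injective on $V_\R$ (already established) and $\gamma(V_\Z)=\pi(V_\Z)$ exactly, injectivity on the quotients is immediate.
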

\begin{proof}
Let $v \in V_\Z$ be a vector considered as a vector in $V_\R$. It can be uniquely written as a sum $v'+w$, where $v'\in(V^{2,0} \oplus V^{0,2})^{\Gal(\bC:\R)}$ and $w\in(V^{1,1})^{\Gal(\bC:\R)}$. It can be further written as $u+\overline{u}+w$, where $u\in V^{2,0}$. One has $\gamma(v) = u + w$. On the other hand, $u + w$ is precisely the $\left((2,0)+(1,1)\right)$-component of the vector $v$ considered as a vector in $V_\bC$, that is, $\pi(v)$. Hence $\gamma(V_\Z)=\pi(V_\Z)$, and the map $\gamma$ descends to the quotients by $V_\Z$ and $\pi(V_\Z)$, respectively.
\end{proof}

\begin{pr}
\label{maximality}
The image $\gamma(A_{V_\Z})$ is the maximal compact subgroup of $J_{V_\Z}$.
\end{pr}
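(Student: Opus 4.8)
The plan is to present both $\gamma(A_{V_\Z})$ and an arbitrary compact subgroup of $J_{V_\Z}$ as quotients of $\R$-linear subspaces of $V^{2,0}\oplus V^{1,1}$ by sublattices of $\pi(V_\Z)$, and then to compare their preimages in $V^{2,0}\oplus V^{1,1}$. First I would record, using the preceding Proposition, that $\gamma$ descends to an embedding $A_{V_\Z}\hookrightarrow J_{V_\Z}$ with image $\gamma(V_\R)/\pi(V_\Z)$, where by the very definition of the canonical embedding $\gamma(V_\R)=V^{2,0}\oplus(V^{1,1})^{\Gal(\bC:\R)}$ and $\gamma(V_\Z)=\pi(V_\Z)$. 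Since $\gamma$ restricts to an $\R$-linear isomorphism of $V_\R$ onto $\gamma(V_\R)$ taking the full lattice $V_\Z$ onto $\pi(V_\Z)$, the group $\pi(V_\Z)$ is a lattice of full rank in $\gamma(V_\R)$; in particular $\gamma(A_{V_\Z})$ is a compact real subtorus of $J_{V_\Z}$, and the real span $\R\langle\pi(V_\Z)\rangle$ equals $\gamma(V_\R)$, i.e. the preimage of $\gamma(A_{V_\Z})$ under the quotient map $p\colon V^{2,0}\oplus V^{1,1}\to J_{V_\Z}$.

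Next I would take an arbitrary compact subgroup $K\subseteq J_{V_\Z}$ and analyse its identity component. The preimage $p^{-1}(K^0)$ is a closed subgroup of $V^{2,0}\oplus V^{1,1}$ containing $\pi(V_\Z)$; its identity component $U$ is an $\R$-subspace, and since $K^0$ is connected one gets $p(U)=K^0$, so $p^{-1}(K^0)=U+\pi(V_\Z)$ and $K^0=U/(U\cap\pi(V_\Z))$. Compactness of $K^0$ forces $U\cap\pi(V_\Z)$ to span $U$ over $\R$, hence $U=\R\langle U\cap\pi(V_\Z)\rangle\subseteq\R\langle\pi(V_\Z)\rangle=\gamma(V_\R)$, and therefore $K^0\subseteq\gamma(V_\R)/\pi(V_\Z)=\gamma(A_{V_\Z})$.

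Finally I would promote this from $K^0$ to all of $K$ by exploiting that $\gamma(V_\R)$ is a vector space and $\pi(V_\Z)\subseteq\gamma(V_\R)$, so that membership in $\gamma(A_{V_\Z})$ is detected after multiplication by any positive integer: given $k\in K$, finiteness of $K/K^0$ yields $n\geq1$ with $nk\in K^0\subseteq\gamma(A_{V_\Z})$; lifting $k$ to some $\tilde k\in V^{2,0}\oplus V^{1,1}$ gives $n\tilde k\in\gamma(V_\R)+\pi(V_\Z)=\gamma(V_\R)$, hence $\tilde k=\tfrac1n(n\tilde k)\in\gamma(V_\R)$ and $k\in\gamma(A_{V_\Z})$. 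Thus every compact subgroup of $J_{V_\Z}$ is contained in the compact subgroup $\gamma(A_{V_\Z})$, which proves maximality. The only point that needs genuine care is the structural input of the middle paragraph — that a compact connected subgroup of the generalized torus $J_{V_\Z}$ is necessarily of the form $U/(U\cap\pi(V_\Z))$ with $U\cap\pi(V_\Z)$ of full rank in the real subspace $U$, i.e. the standard description of closed connected subgroups of a complex vector group modulo a discrete subgroup; the remaining steps are bookkeeping with the properties of $\gamma$ already established.
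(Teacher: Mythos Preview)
Your argument is correct. You show directly that every compact subgroup of $J_{V_\Z}$ sits inside $\gamma(A_{V_\Z})$ by lifting to the cover $V^{2,0}\oplus V^{1,1}$, using that a compact connected quotient $U/(U\cap\pi(V_\Z))$ forces $U\subseteq\R\langle\pi(V_\Z)\rangle=\gamma(V_\R)$, and then finishing with a divisibility trick for the non-identity components.

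The paper, by contrast, does not inspect an arbitrary compact subgroup at all: it simply invokes the known fact that for a quotient of a complex vector space by a discrete subgroup $\Lambda$, the maximal compact subgroup has real dimension equal to $\mathrm{rank}\,\Lambda$; here $\mathrm{rank}\,\pi(V_\Z)=\dim_\R V_\R$, and since $\gamma(A_{V_\Z})$ is a compact torus of that same dimension, it must coincide with the maximal compact subgroup. So the paper's proof is a two-line dimension count relying on the structure theory of abelian complex Lie groups, whereas your proof is longer but essentially self-contained: you are in effect reproving, in this instance, the identification of the maximal compact subgroup with $\R\langle\Lambda\rangle/\Lambda$. Both routes reach the same conclusion; yours buys independence from the background fact, the paper's buys brevity.
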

\begin{proof}
The real dimension of maximal compact subgroup in $J_{V_\Z}$ equals to $\dim V_\R$, just like the dimension of $A_{V_\Z}=\gamma(A_{V_\Z})$. On the other hand, $\gamma(A_{V_\Z})$ is compact.
\end{proof}

\begin{pr}
The maximal compact subgroup of the complex Lie group $J_{V_\Z}$ with the induced CR structure on it is CR isomorphic to the bi-CR group $A_{V_\Z}$.
\end{pr}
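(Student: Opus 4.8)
The plan is to assemble the statement from the three propositions just proved together with Proposition~\ref{iso}, the only extra input being the routine fact that forming the induced CR structure on a subgroup commutes with passing to a holomorphic covering. First recall what is already in hand: the canonical embedding $\gamma$ descends to a homomorphism $A_{V_\Z}\to J_{V_\Z}$; this homomorphism is injective, since $\gamma$ restricts to a linear isomorphism $V_\R\xrightarrow{\sim}\gamma(V_\R)$ carrying the lattice $V_\Z$ onto $\pi(V_\Z)$; and its image $\gamma(A_{V_\Z})$ is the maximal compact subgroup $K\subset J_{V_\Z}$ by Proposition~\ref{maximality}. Thus $\gamma$ already realizes a group isomorphism $A_{V_\Z}\xrightarrow{\sim}K$, and the whole content of the statement is that $\gamma$ is moreover a CR isomorphism once $K$ is given the CR structure induced from the ambient complex Lie group $J_{V_\Z}$ and $A_{V_\Z}$ is given the CR structure underlying its bi-CR structure.

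Next I would move the comparison of CR structures down to the Lie algebra. The quotient map $q\colon V^{2,0}\oplus V^{1,1}\to J_{V_\Z}$ is a holomorphic covering, hence a local biholomorphism, for the natural complex structure on the vector space $V^{2,0}\oplus V^{1,1}=\mathrm{Lie}(J_{V_\Z})$, and it restricts to a covering $\gamma(V_\R)\to K$ (because $\pi(V_\Z)\subseteq\gamma(V_\R)$, so $q^{-1}(K)=\gamma(V_\R)$). Since a biholomorphism carries a real submanifold with its induced CR structure to a real submanifold with its induced CR structure, and since both the complex structure on $J_{V_\Z}$ and the inclusion $K\hookrightarrow J_{V_\Z}$ are left invariant, the CR structure induced on $K$ is the left invariant one whose germ at the identity coset is the CR structure induced, in the sense of Proposition~\ref{example}, on the real subspace $\gamma(V_\R)\subseteq V^{2,0}\oplus V^{1,1}$.

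Finally, Proposition~\ref{iso} says precisely that $\gamma\colon V_\R\to\gamma(V_\R)$ is a CR isomorphism, where $V_\R$ carries the CR structure with distinguished complex subspace $(V^{2,0}\oplus V^{0,2})^{\Gal(\bC:\R)}$ --- that is, the CR part of the bi-CR structure attached to the weight $2$ Hodge structure on $V_\R$ under Proposition~\ref{r_2_equals_bi-CR}, which is by definition the CR structure of $A_{V_\Z}$. Passing to the quotients by the lattices $V_\Z$ and $\gamma(V_\Z)=\pi(V_\Z)$, the left invariant CR structures correspond, so $\gamma\colon A_{V_\Z}\to K$ is the desired CR isomorphism. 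The only step needing any care is the second one --- that ``take the induced CR structure'' is compatible with holomorphic coverings and, on a homogeneous submanifold of a complex Lie group, is determined by its infinitesimal datum at the identity --- but this is standard and I would simply state it and leave the check to the reader. I would also flag the mild abuse in the wording of the statement: the embedding records only the CR structure of the bi-CR group $A_{V_\Z}$, not its co-CR structure.
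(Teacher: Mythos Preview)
Your proof is correct and follows essentially the same route as the paper: invoke Proposition~\ref{maximality} to identify the maximal compact subgroup with $\gamma(A_{V_\Z})$, then invoke Proposition~\ref{iso} to conclude that $\gamma$ is a CR isomorphism. The paper's proof is two sentences and leaves implicit exactly the points you spell out --- that the induced CR structure on $K$ is left invariant and determined at the identity by the linear CR structure on $\gamma(V_\R)$, and that the CR isomorphism on universal covers descends to the quotients --- so your version is simply a more careful unpacking of the same argument; your closing remark about the co-CR structure being forgotten is also apt.
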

\begin{proof}
By Proposition \ref{maximality}, the maximal compact subgroup of $J_{V_\Z}$ is the subgroup $\gamma(A_{V_\Z})$. The induced CR structure on it is isomorphic (via $\gamma$) to the CR structure on $A_{V_\Z}$ by Proposition \ref{iso}.
\end{proof}

Suppose that $G$ is a Cousin group. If it is Hodge Cousin group of some $\Z$-Hodge structure of weight 2 $V_\Z$, then the summands $V^{2,0}$ and $V^{0,2}$ can be reconstructed from its complex structure alone: namely, the maximal compact subgroup $G_c \subseteq G$ carries a CR structure, which rises to a (linear) CR structure on its universal cover $\widetilde{G_c}$. If $H\subseteq\widetilde{G_c}$ is its distinguished complex subspace, then there exists an isomorphism $\widetilde{G_c}\otimes\bC \to V_\bC$ which identifies $H^{1,0}$ with $V^{2,0}$. This observation leads to the following
\begin{probl}
Describe the Hodge Cousin groups among the Cousin groups in terms of their complex Lie group structure.
\end{probl}

\section{Polarization}
The main results of the present section are contained in the unpublished paper \cite{OVV} by L.~Ornea, M.~Verbitsky and V.~Vuletescu and have been presented in a talk \cite{V} given by Verbitsky in the University of Nottingham at September~14, 2016. However, they motivated the above work, so we prove them here.

\subsection{Reminder on abelian varieties}
\begin{defn}
A {\it polarization} on the $k$-Hodge structure of weight $n$ $V$ is a $k$-valued bilinear form $Q$ on $V_k$ such that 
\begin{enumerate}
\item $Q(v,u)=(-1)^nQ(u,v)$, 
\item $Q(u,v)=0$ for $u\in V^{p,q}$, $v\in V^{r,s}$ with $p\neq s$, $q\neq r$,
\item $\sqrt{-1}^{p-q}Q(u,\overline{u})>0$ for nonzero $u\in V^{p,q}$.
\end{enumerate}
Maps between polarized Hodge structures are orthogonal maps of underlying Hodge structures.
\end{defn}

Let me remind some well-known facts.

\begin{pr}
\label{polar_r_1_equals_herm}
The category of polarized $\R$-Hodge structures of weight $1$ is equivalent to the category of Hermitian vector spaces.
\end{pr}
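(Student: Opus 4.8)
The plan is to build a functor in each direction and check they are mutually quasi-inverse. Start from a polarized $\R$-Hodge structure of weight $1$: an $\R$-vector space $V_\R$ with $V_\bC = V^{1,0}\oplus V^{0,1}$, $\overline{V^{1,0}}=V^{0,1}$, and a form $Q$ satisfying $Q(v,u)=-Q(u,v)$, orthogonality of $V^{1,0}$ and $V^{0,1}$ in the appropriate sense, and $\sqrt{-1}^{\,p-q}Q(u,\overline u)>0$. By Proposition~\ref{r_1_equals_cpx} the data $(V_\R, V^{1,0})$ is the same as a complex vector space $(V_\R, I)$ where $I$ is the complex structure with $\sqrt{-1}$-eigenspace $V^{1,0}$. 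First I would show that $Q$ is $I$-invariant, i.e. $Q(Iu,Iv)=Q(u,v)$: this follows from axiom (ii), since on $V_\bC$ the form pairs $V^{1,0}$ only with $V^{0,1}$, and $I$ acts by $\sqrt{-1}$ on one and $-\sqrt{-1}$ on the other, so the two sign factors cancel. Then I would define the Hermitian form by the classical recipe
\[
h(u,v) \;=\; Q(u,Iv) \;+\; \sqrt{-1}\,Q(u,v).
\]
I must check $h$ is $\bC$-linear in (say) the second argument — linearity over $\R$ is clear, and $h(u,Iv)=Q(u,-v)+\sqrt{-1}Q(u,Iv)=\sqrt{-1}h(u,v)$ using $I^2=-1$ — conjugate-symmetric — $\overline{h(u,v)}=Q(u,Iv)-\sqrt{-1}Q(u,v)$, and using antisymmetry of $Q$ and $I$-invariance one gets this equals $h(v,u)$ — and positive-definite, which is exactly where axiom (iii) enters: one rewrites $h(u,u)=Q(u,Iu)$ and relates $Q(u,Iu)$ to the positivity condition $\sqrt{-1}\,Q(w,\overline w)>0$ for $w\in V^{1,0}$ by decomposing a real vector $u$ as $w+\overline w$.

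Conversely, given a Hermitian vector space $(V,h)$, view $V$ as $(V_\R,I)$, hence as an $\R$-Hodge structure of weight $1$ via Proposition~\ref{r_1_equals_cpx}, and set $Q(u,v) = \operatorname{Im} h(u,v)$ (equivalently the imaginary part, up to sign conventions). One checks (i) antisymmetry from conjugate-symmetry of $h$; (ii) by passing to $V_\bC$ and observing that $h$ extended bilinearly pairs the $\pm\sqrt{-1}$-eigenspaces of $I$ correctly; (iii) positivity of $h$ translates into the claimed positivity of $\sqrt{-1}^{\,p-q}Q(u,\overline u)$. That these two assignments are inverse to each other is the identity $\operatorname{Im} h(u,v) = Q(u,v)$ together with the recovery formula $h(u,v)=Q(u,Iv)+\sqrt{-1}Q(u,v)$, both of which are formal once the compatibilities above are in place.

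Finally, morphisms: a map of polarized $\R$-Hodge structures of weight $1$ is an $\R$-linear map respecting the Hodge decomposition — equivalently, by Proposition~\ref{r_1_equals_cpx}, a $\bC$-linear map — which is in addition orthogonal for $Q$; I would show orthogonality for $Q$ combined with $\bC$-linearity is equivalent to being an isometry for $h$, using $h(u,v)=Q(u,Iv)+\sqrt{-1}Q(u,v)$ and the fact that a $\bC$-linear map automatically commutes with $I$, so preserving $Q$ forces preserving $Q(\cdot,I\cdot)$ as well. Hence the two categories are equivalent. The main obstacle — really the only nonformal point — is getting the sign bookkeeping in axioms (ii) and (iii) to line up with the $\sqrt{-1}^{\,p-q}$ convention so that positive-definiteness of $h$ comes out with the correct sign; everything else is a routine unwinding of definitions, which I would state without belaboring the computations.
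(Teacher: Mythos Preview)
Your approach is essentially the paper's: show $I$-invariance of $Q$, build the Hermitian form as $(\text{real part involving }Q\text{ and }I)+\sqrt{-1}Q$, and take imaginary parts in the reverse direction. You are in fact more thorough than the paper on morphisms and on the inverse equivalence.

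There is, however, a concrete sign error in your formula $h(u,v)=Q(u,Iv)+\sqrt{-1}Q(u,v)$. With the paper's conventions, write a nonzero real vector as $x=w+\overline w$ with $w\in V^{1,0}$; then $Ix=\sqrt{-1}(w-\overline w)$ and
\[
Q(x,Ix)=\sqrt{-1}\bigl(-Q(w,\overline w)+Q(\overline w,w)\bigr)=-2\sqrt{-1}\,Q(w,\overline w)<0,
\]
since axiom~(iii) gives $\sqrt{-1}\,Q(w,\overline w)>0$. So your $h$ is negative definite. The paper instead sets $g(x,y)=Q(Ix,y)$ and $h=g+\sqrt{-1}Q$; since $Q(Iu,v)=-Q(u,Iv)$ (from $I$-invariance plus antisymmetry), this is exactly the opposite sign on the real part, and one gets $g(x,x)=2\sqrt{-1}Q(w,\overline w)>0$. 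You correctly flagged the sign bookkeeping as the one nonformal point; you just landed on the wrong side of it. Swap $Q(u,Iv)$ for $Q(Iu,v)$ (or insert a minus sign) and everything else you wrote goes through.
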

\begin{proof}
For an $\R$-Hodge structure of weight 1 $V$ (which is by Proposition \ref{r_1_equals_cpx} the same as a complex structure operator on $V_\R$) a polarization $Q$ is skew-symmetric. If $u,v \in V^{1,0}$, then one has $Q(I(u+\overline{u}),I(v+\overline{v})) = Q(\sqrt{-1}(u-\overline{u}),\sqrt{-1}(v-\overline{v})) = -Q(u-\overline{u},v-\overline{v}) = Q(u,\overline{v}) + Q(\overline{u},v) = Q(u+\overline{u},v+\overline{v})$, that is, $I$ is orthogonal w.~r.~t. $Q$. If we define $g(x,y)=Q(Ix,y)$ for $x,y\in V_\R$, then $g(y,x) = Q(Iy,x) = -Q(x,Iy) = -Q(Ix,I^2y) = -Q(Ix,-y) = Q(Ix,y) = g(x,y)$, go $g$ is symmetric. Moreover, for $0 \neq x = u+\overline{u}$, $u\in V^{1,0}$ one has $g(x,x) = Q(I(u+\overline{u}),u+\overline{u}) = Q(\sqrt{-1}(u-\overline{u}),u+\overline{u}) = \sqrt{-1}(Q(u,\overline{u}) - Q(\overline{u},u)) = 2\sqrt{-1}Q(u,\overline{u}) > 0$, so the form $g$ is positive definite, and the form $h=g+\sqrt{-1}Q$ is Hermitian.

The other way around, imaginary part of any Hermitian form gives rise to a polarization on the correspondent weight $1$ Hodge structure.
\end{proof}

\begin{defn}
A complex torus with a left invariant K\"ahler form $\omega$ is called an {\it abelian variety}, if $[\omega]$ is an integral cohomology class.
\end{defn}

By Kodaira's embedding theorem, it is the same that a projective complex torus.

\begin{pr}
\label{polar_z_1_equals_abel}
The category of polarized $\Z$-Hodge structures of weight $1$ is equivalent to the category of abelian varieties.
\end{pr}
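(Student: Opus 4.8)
The plan is to combine the equivalence of Proposition \ref{polar_r_1_equals_herm} (polarized $\R$-Hodge structures of weight $1$ = Hermitian vector spaces) with the torus correspondence of Proposition \ref{z_1_equals_tori} ($\Z$-Hodge structures of weight $1$ = complex tori), and to check that under these equivalences a polarization on the integral level matches precisely the integrality of a K\"ahler class on the torus. Concretely, given a polarized $\Z$-Hodge structure $(V_\Z, Q)$, Proposition \ref{polar_r_1_equals_herm} applied to $V_\R = V_\Z \otimes \R$ produces a Hermitian form $h = g + \sqrt{-1}Q$ on the complex vector space $\widetilde{X} = V^{1,0}$ (using Proposition \ref{complex} to identify $V_\R$ with $V^{1,0}$ as a complex vector space). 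Its imaginary part $Q$ is a left-invariant real $2$-form $\omega$ on $X = \widetilde{X}/\pi(V_\Z)$; positive-definiteness of $g$ says $\omega$ is a K\"ahler form, and the fact that $Q$ takes integer values on $V_\Z$ says exactly that $[\omega] \in H^2(X,\Z)$ under the standard identification $H^2(X,\R) = \wedge^2(V_\R)^\vee$ with the lattice $H^2(X,\Z) = \wedge^2(V_\Z)^\vee$. Hence $X$ with $\omega$ is an abelian variety.

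Conversely, starting from an abelian variety $(X, \omega)$, let $V_\Z = H_1(X,\Z)$ (or $\ker(\widetilde{X} \to X)$), which by Proposition \ref{z_1_equals_tori} carries a weight-$1$ $\Z$-Hodge structure. The class $[\omega] \in H^2(X,\Z) = \wedge^2(V_\Z)^\vee$ is an integral skew-symmetric bilinear form $Q$ on $V_\Z$; that $\omega$ is of type $(1,1)$ translates into condition (ii) of the definition of polarization (vanishing of $Q$ on the "wrong" pieces $V^{1,0} \times V^{1,0}$ and $V^{0,1} \times V^{0,1}$ after complexification), and that $\omega$ is positive, i.e. $g(x,y) = Q(Ix,y)$ is positive-definite, gives condition (iii) by the computation already performed in the proof of Proposition \ref{polar_r_1_equals_herm}. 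So $(V_\Z, Q)$ is a polarized $\Z$-Hodge structure, and these two constructions are mutually inverse on objects.

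For morphisms, I would argue that a morphism of polarized $\Z$-Hodge structures is by definition an orthogonal map of the underlying $\Z$-Hodge structures, hence by Proposition \ref{z_1_equals_tori} corresponds to a holomorphic homomorphism of the tori; orthogonality for $Q$ says precisely that the induced map pulls back one K\"ahler class to the other, i.e. it is a morphism of abelian varieties (a homomorphism respecting the polarization classes). Conversely such a morphism of abelian varieties induces on $H_1(-,\Z)$ a map of $\Z$-Hodge structures that is orthogonal by naturality of pullback on $H^2$. Functoriality and compatibility with composition are then immediate, so the two categories are equivalent.

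The main obstacle — really the only non-formal point — is the identification that "$Q$ is $\Z$-valued on $V_\Z$" corresponds to "$[\omega]$ is an integral cohomology class", i.e. pinning down the isomorphism $H^2(X,\Z) \cong \wedge^2 (V_\Z)^\vee$ and checking that the left-invariant $2$-form associated to the alternating form $Q$ represents the class dual to $Q$. This is the standard Appell--Humbert / Riemann-relations story for abelian varieties, and I would either cite it or spell out the de Rham representative of a generator of $H^2$ of a torus; everything else is a repackaging of Propositions \ref{polar_r_1_equals_herm}, \ref{r_1_equals_cpx}, \ref{complex} and \ref{z_1_equals_tori} together with Kodaira's embedding theorem (already invoked) to know that "abelian variety" in the sense defined here is the correct target category.
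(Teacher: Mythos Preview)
Your proposal is correct and follows essentially the same route as the paper: combine Propositions \ref{z_1_equals_tori} and \ref{polar_r_1_equals_herm}, then check that integrality of $Q$ on $V_\Z$ matches integrality of $[\omega]$. The paper carries out the one non-formal step you flag by the direct computation $\int_{c_{u,v}}\omega = Q(u,v)$ over the projected parallelogram on $u,v\in V_\Z$, which is precisely the ``spell out the de Rham representative'' option you mention; your treatment is in fact more complete, since the paper only writes out one direction and omits the discussion of morphisms.
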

\begin{proof}
By Propositions \ref{z_1_equals_tori} and \ref{polar_r_1_equals_herm}, the category of polarized $\Z$-Hodge structures of weight $1$ is equivalent to the category of complex tori with left-invariant Hermitian metric satisfying certain integrality conditions. 

Consider a Hermitian torus $(X,\omega)$ which is correspondent to a polarized $\Z$-Hodge structure of weight $2$ $V$. The $2$-form $\omega$ is parallel w.~r.~t. Levi-Civita connection and is hence closed, so $(X,\omega)$ a K\"ahler torus. We need to show that the K\"ahler class $[\omega]$ is integral. For any two vectors $u,v \in V_\Z$ consider an oriented parallelogram spanned by them in $V_\R$, and denote by $c_{u,v}$ the $2$-cycle in $X$, which is the projection of this parallelogram into $X=V_\R / V_\Z$. Homology classes of such cycles $[c_{u,v}]$ generate the group $H_2(X,\Z)$. On the other hand, it is easy to see that $\int_{c_{u,v}}\omega = Q(u,v)$. As the form $Q$ is integral, the class $[\omega]$ is also integral, and $(X,\omega)$ is an abelian variety.
\end{proof}

\begin{pr}
\label{polar_q_1_equals_abel_isog}
The category of polarized $\Q$-Hodge structures of weight $1$ is equivalent to the category of abelian varieties up to isogeny.
\end{pr}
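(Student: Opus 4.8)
The plan is to imitate the proof of Proposition \ref{q_1_equals_tori_isog}, clearing denominators to pass from $\Q$- to $\Z$-coefficients while now carrying along the polarization form, and to invoke the equivalence of Proposition \ref{polar_z_1_equals_abel} in place of Proposition \ref{z_1_equals_tori}. Given a polarized $\Q$-Hodge structure $(V_\Q,Q)$ of weight $1$, I would pick a basis of $V_\Q$, let $L$ be the lattice it spans, and note that $Q$ takes values in $\frac1m\Z$ on $L\times L$ for some positive integer $m$; replacing $L$ by $mL$ then makes $Q$ integer-valued on the lattice, so $(mL,Q|_{mL})$ is a polarized $\Z$-Hodge structure of weight $1$, to which Proposition \ref{polar_z_1_equals_abel} attaches an abelian variety.

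Next I would check independence of all choices up to isogeny. Two admissible lattices $L_1,L_2\subset V_\Q$ --- each spanned by a basis and rescaled so that $Q$ restricts to it integrally --- are commensurable, and $L_3=L_1\cap L_2$ is again a full-rank lattice on which $Q$ is integral; the covering maps $V_\R/L_3\to V_\R/L_i$ are isogenies, and since the $2$-cycles $c_{u,v}$ with $u,v\in L_3$ generate $H_2(V_\R/L_3,\Z)$ and satisfy $\int_{c_{u,v}}\omega=Q(u,v)$ on both tori, these isogenies carry the K\"ahler class of $V_\R/L_i$ exactly to that of $V_\R/L_3$, so $V_\R/L_1$ and $V_\R/L_2$ become isomorphic in the category of abelian varieties up to isogeny. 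For morphisms, an orthogonal map $f\colon(V_\Q,Q_V)\to(W_\Q,Q_W)$ is injective since a polarization is nondegenerate, and taking the source lattice to be the preimage under $f$ of a chosen admissible target lattice makes $f$ a morphism of polarized $\Z$-Hodge structures, hence of abelian varieties by Proposition \ref{polar_z_1_equals_abel}; conversely a roof $X\xleftarrow{g}Z\xrightarrow{h}Y$ of morphisms of abelian varieties with $g$ an isogeny yields, upon applying $(-)\otimes\Q$ to the homology Hodge structures, a morphism $V_\Q(X)\xrightarrow{\sim}V_\Q(Z)\to V_\Q(Y)$ of polarized $\Q$-Hodge structures. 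That these assignments are functorial, mutually inverse, and insensitive to the representative roof is routine, exactly as in Proposition \ref{q_1_equals_tori_isog}.

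The main obstacle is the bookkeeping forced by the polarization: one must confirm not only that a lattice can always be rescaled to make $Q$ integral, but that the comparison isogenies between different admissible lattices preserve the polarization forms on the nose rather than merely up to a positive scalar, so that the equivalence actually lands in the category of abelian varieties up to isogeny with its polarization-respecting morphisms and not in the coarser category of complex tori up to isogeny of Proposition \ref{q_1_equals_tori_isog}. Once this is pinned down, everything reduces to the already-settled integral weight-$1$ case of Proposition \ref{polar_z_1_equals_abel} together with the localization formalism.
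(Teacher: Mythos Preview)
Your proposal is correct and is exactly the approach the paper takes: the paper's entire proof reads ``This is a combination of Proposition \ref{q_1_equals_tori_isog} and \ref{polar_z_1_equals_abel},'' and you have simply unpacked what that combination means---clearing denominators on both the lattice and the form, then invoking the integral case. The additional care you take with the polarization bookkeeping (rescaling so $Q$ is integral, checking that the comparison isogenies between admissible lattices respect the common form $Q$ on $V_\R$) is precisely the content the paper leaves implicit.
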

\begin{proof}
This is a combination of Proposition \ref{q_1_equals_tori_isog} and \ref{polar_z_1_equals_abel}.
\end{proof}

\begin{pr}[H.~Poincar\'e, 1886 \cite{P}]The category of abelian varieties up to isogeny is semisimple.
\end{pr}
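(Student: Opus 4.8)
The plan is to carry the statement across the equivalence of Proposition \ref{polar_q_1_equals_abel_isog} and prove instead that the category of polarized $\Q$-Hodge structures of weight $1$ is semisimple, i.e.\ that every object is a finite direct sum of simple ones. Since the $\Q$-dimension strictly drops when one passes to a proper nonzero sub-Hodge structure, it suffices to show that every sub-Hodge structure $W$ of a polarized $\Q$-Hodge structure $(V,Q)$ admits a $Q$-orthogonal complement which is again a sub-Hodge structure; one then peels off a minimal (hence simple) nonzero subobject and inducts on $\dim_\Q V$. This is exactly the Hodge-theoretic avatar of Poincar\'e's complete reducibility theorem.

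So let $W \subseteq V$ be a sub-Hodge structure and set $W^{\perp} = \{\, v \in V_\Q : Q(v,w) = 0 \text{ for all } w \in W_\Q \,\}$. First I would note that $Q$ restricts to a polarization of $W$: axioms (i) and (ii) are inherited verbatim, and (iii) holds because $W^{p,q} = W_\bC \cap V^{p,q} \subseteq V^{p,q}$. In particular $Q|_W$ is nondegenerate: for weight $1$, axiom (ii) forces $Q(V^{1,0},V^{1,0}) = Q(V^{0,1},V^{0,1}) = 0$, so for any $w \in W_\bC$ with Hodge component $w^{p,q} \neq 0$ one has $Q(w,\overline{w^{p,q}}) = Q(w^{p,q},\overline{w^{p,q}}) \neq 0$ by (iii), and $\overline{w^{p,q}} \in W_\bC$; hence no nonzero vector of $W_\Q$ lies in the radical. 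Therefore $W \cap W^{\perp} = 0$, and since $Q$ is nondegenerate on $V$ as well, a rank count gives $V_\Q = W_\Q \oplus W^{\perp}$.

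Next I would check that $W^{\perp}$ is a sub-Hodge structure. Being a $\Q$-subspace it is stable under complex conjugation, so $\overline{(W^{\perp})^{p,q}} = (W^{\perp})^{q,p}$ automatically; what remains is $W^{\perp}_\bC = (W^{\perp}_\bC \cap V^{1,0}) \oplus (W^{\perp}_\bC \cap V^{0,1})$. Extending $Q$ $\bC$-bilinearly, the pairing is supported on the perfect pairing $V^{1,0} \times V^{0,1} \to \bC$, so for $v = v^{1,0} + v^{0,1}$ the condition $Q(v, W_\bC) = 0$ decouples into $Q(v^{0,1}, W^{1,0}) = 0$ and $Q(v^{1,0}, W^{0,1}) = 0$, i.e.\ into $v^{0,1} \in W^{\perp}_\bC$ and $v^{1,0} \in W^{\perp}_\bC$; this is precisely the desired direct-sum decomposition. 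Since $Q$ restricts to a polarization of $W^{\perp}$ by the same inheritance argument as for $W$, the splitting $V = W \oplus W^{\perp}$ is a direct sum of polarized $\Q$-Hodge structures, and the induction closes.

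The only real subtlety will be keeping the categorical conventions straight: here "semisimple" should be read as the statement that every object is a finite direct sum of simple objects (the endomorphism algebra of a simple object being allowed to be a division algebra over $\Q$, not $\Q$ itself), with the existence of simple objects supplied by the dimension induction; and one must remember that the construction of $W^{\perp}$ uses the chosen form $Q$, which is harmless precisely because the equivalence of Proposition \ref{polar_q_1_equals_abel_isog} lets us equip any object of the target category with a polarization to begin with. Apart from this bookkeeping, the whole proof rests on the single point that positivity axiom (iii) keeps the polarization form nondegenerate on every sub-Hodge structure.
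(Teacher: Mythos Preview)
Your argument is correct and follows exactly the route the paper takes: transport the question to polarized $\Q$-Hodge structures of weight $1$ via Proposition~\ref{polar_q_1_equals_abel_isog} and split off sub-Hodge structures by taking $Q$-orthogonal complements. The paper compresses this into a single sentence (``one may take orthogonals w.~r.~t.\ $Q$''), whereas you have spelled out the nondegeneracy of $Q|_W$ and the Hodge decomposition of $W^{\perp}$ in full; nothing is missing.
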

\begin{proof}
The category of $\Q$-Hodge structures of weight $1$ is semisimple because one may take orthogonals w.~r.~t. $Q$. However, by Proposition \ref{polar_q_1_equals_abel_isog} this category is equivalent to the category of abelian varieties up to isogeny.
\end{proof}

This Poincar\'e complete reducibility theorem, stated in less fancy terms, says that for any abelian subvariety $A'\subseteq A$ inside the abelian variety $A$ there exists an abelian variety $A''\subseteq A$ such that $A$ is isogenous to $A' \times A''$.

\subsection{Abelian Cousin groups}
Now we want to generalize these facts to the Hodge structures of weight $2$.

\begin{defn}
We call a pair $((V, H, F, I, J),g)$ consisting of a bi-CR vector space $(V, H, F, I, J)$ and a pseudo-Euclidean metric $g$ on $V$ {\it CR Hermitian}, if 
\begin{enumerate}
\item $g|_H$ is negative definite and $I$-invariant,
\item $g|_F$ is positive definite;
\item $H\perp^gF$.
\end{enumerate}
\end{defn}

\begin{pr}
\label{polar_r_2_equals_cr_herm}
The category of polarized $\R$-Hodge structure of weight $2$ is equivalent to the category of CR Hermitian vector spaces.
\end{pr}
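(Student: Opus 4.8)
The plan is to mimic the proof of Proposition \ref{polar_r_1_equals_herm}, but now working with the bi-CR description of weight-$2$ Hodge structures established in Proposition \ref{r_2_equals_bi-CR}. Start with a polarized $\R$-Hodge structure $V$ of weight $2$, so $V_\bC = V^{2,0}\oplus V^{1,1}\oplus V^{0,2}$ with $Q$ symmetric (since $n=2$), the orthogonality condition (ii), and the positivity $\sqrt{-1}^{p-q}Q(u,\overline u)>0$ on each $V^{p,q}$. Recall from Proposition \ref{r_2_equals_bi-CR} that the associated bi-CR structure has $H = (V^{2,0}\oplus V^{0,2})^{\Gal(\bC:\R)}$ and $F = (V^{1,1})^{\Gal(\bC:\R)}$, with $I$ the restriction of $\sqrt{-1}$ on $V^{2,0}$ transported via $v\mapsto v+\overline v$. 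The metric I would attach is simply $g = Q$ itself (possibly up to a harmless sign or normalization): I claim $g=Q$ is already CR Hermitian for this bi-CR structure.

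The key steps, in order. First, verify $H \perp^g F$: this is immediate from condition (ii), since a vector of $H$ is a combination of a $(2,0)$-class and a $(0,2)$-class while a vector of $F$ is a $(1,1)$-class, and $(2,0)$ pairs nontrivially only with $(0,2)$ under $Q$, $(1,1)$ only with $(1,1)$; hence $Q(H,F)=0$. Second, check $g|_F$ is positive definite: for $0\neq w\in F=(V^{1,1})^{\Gal(\bC:\R)}$ we have $w=\overline w$, so $Q(w,w)=Q(w,\overline w)$, and condition (iii) with $p=q=1$ gives $\sqrt{-1}^{0}Q(w,\overline w)=Q(w,\overline w)>0$. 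Third, check $g|_H$ is negative definite and $I$-invariant: for $0\neq x = u+\overline u\in H$ with $u\in V^{2,0}$, compute $Q(x,x)=Q(u,u)+2Q(u,\overline u)+Q(\overline u,\overline u)$; the outer terms vanish by (ii) ($(2,0)$ pairs only with $(0,2)$), so $Q(x,x)=2Q(u,\overline u)$, and condition (iii) with $p=2,q=0$ gives $\sqrt{-1}^{2}Q(u,\overline u)=-Q(u,\overline u)>0$, i.e. $Q(u,\overline u)<0$, so $Q(x,x)<0$. For $I$-invariance, $I(u+\overline u)=\sqrt{-1}(u-\overline u)$, and a parallel computation as in Proposition \ref{polar_r_1_equals_herm} gives $Q(I x, I x) = -Q(u-\overline u,u-\overline u) = 2Q(u,\overline u) = Q(x,x)$ (using again that $Q(u,u)=Q(\overline u,\overline u)=0$); polarizing shows $Q(Ix,Iy)=Q(x,y)$ on $H$. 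Conversely, given a CR Hermitian space $((V,H,F,I,J),g)$, produce the weight-$2$ Hodge structure via Proposition \ref{r_2_equals_bi-CR} (from the bi-CR data) and define $Q:=g$; then run the three computations backwards to recover the polarization axioms — symmetry of $Q$ is automatic since $g$ is a metric, axiom (ii) follows from $H\perp^g F$ together with $H^{1,0}=V^{2,0}$, $F\otimes\bC=V^{1,1}$, and the positivity (iii) is exactly the translation of negative-definiteness of $g|_H$ and positive-definiteness of $g|_F$.

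Finally I would check functoriality: a morphism of polarized $\R$-Hodge structures is by definition a morphism of Hodge structures that is orthogonal for $Q$; under Proposition \ref{r_2_equals_bi-CR} this is a bi-CR linear map, and orthogonality for $Q=g$ is exactly the statement that it is an isometry, i.e. a morphism of CR Hermitian spaces; and conversely. This gives mutually inverse functors. The main obstacle, such as it is, is purely bookkeeping: one must be careful that the ``pseudo-Euclidean'' signature splits correctly along $H\oplus F$ (the metric is negative definite on the $2\dim_\bC V^{2,0}$-dimensional subspace $H$ and positive definite on the complement $F$), and that the sign conventions in axiom (iii) — the factor $\sqrt{-1}^{p-q}$ equals $-1$ on $V^{2,0}$ and $+1$ on $V^{1,1}$ — match the sign conventions (i)–(ii) in the definition of CR Hermitian. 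Once the signs are pinned down the equivalence is formal, exactly parallel to the weight-$1$ case.
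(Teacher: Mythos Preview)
Your proposal is correct and follows essentially the same route as the paper: identify the bi-CR structure via Proposition~\ref{r_2_equals_bi-CR}, take $g=Q$, and verify the three axioms of a CR Hermitian structure by the same elementary computations with $u+\overline{u}\in H$ and $w\in F$. If anything, you are more thorough than the paper, which only spells out the forward direction; your explicit treatment of the converse and of functoriality fills in what the paper leaves implicit.
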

\begin{proof}
By Proposition \ref{r_2_equals_bi-CR}, the bi-CR vector spaces and $\R$-Hodge structures of weight $2$ are the same. Let us examine what new structure comes up on bi-CR vector space $(V,H,F)$ along with polarization $Q$ on the corresponding Hodge structure $V_\bC=V^{2,0} \oplus V^{1,1} \oplus V^{0,2}$. This is certainly some bilinear form on the underlying $\R$-vector space $V=V_\R$, so let us denote by the same letter $Q$.

First, $Q$ is symmetric since $Q(x,y) = (-1)^2Q(y,x)$.

Second, for $x,y\in H$, where $x=u+\overline{u}$, $y=v+\overline{v}$ for $u,v\in V^{2,0}$, one has $Q(Ix,Iy) = Q(\sqrt{-1}(u-\overline{u}),\sqrt{-1}(v-\overline{v})) = -Q(u-\overline{u},v-\overline{v}) = -(-Q(u,\overline{v}) - Q(\overline{u},v)) = Q(u,\overline{v}) + Q(\overline{u},v) = Q(u+\overline{u},v+\overline{v}) = Q(x,y)$. If $x\neq 0$, then one has $Q(x,x) = Q(u+\overline{u},u+\overline{u}) = Q(u,\overline{u})+Q(\overline{u},u) = 2Q(u,\overline{u}) < 0$. If $0\neq x\in F=(V^{1,1})^{\Gal(\bC:\R)}$, then $Q(x,x)=\sqrt{-1}^{1-1}Q(x,x)>0$.

Finally, $H$ and $F$ are perpendicular because $V^{2,0}\oplus V^{0,2}$ and $V^{1,1}$ are.
\end{proof}

As we want to speak about pseudo-Riemannian manifolds, we need to introduce an invariant which would replace the usual notion of length of the curve.

\begin{defn}
Let $(X,g)$ be a pseudo-Riemannian manifold and $\gamma\colon[0;1]\to X$ be a curve. Then we call the value $\int_0^1 g(d\gamma(\partial_t),d\gamma(\partial_t))dt$ as {\it action} of $\gamma$.
\end{defn}

This terminology is borrowed from classical mechanics, see e.~g. \cite{B}.

\begin{defn}
Let $(X,g)$ be a pseudo-Riemannian manifold. We call it {\it polarized}, if the action of any closed geodesic on it is a rational number.
\end{defn}

For bi-CR tori an equivalent definition of polarization was given in the paper \cite{CT} by P.~Caressa and A.~Tomassini.

\begin{pr}
\label{polar_q_2_equals_cr_herm_tori}
The category of polarized $\Q$-Hodge structures of weight $2$ is equivalent to the category of left-invariant polarized CR Hermitian tori up to isogeny.
\end{pr}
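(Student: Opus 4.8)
The plan is to follow the template established by Propositions \ref{q_1_equals_tori_isog} and \ref{polar_q_1_equals_abel_isog}: pass from the $\Q$-level to a $\Z$-level statement, invoke the weight-$2$ torus dictionary, and then match the analytic ``polarization'' condition on the geometric side with the algebraic polarization $Q$. Concretely, I would first reduce to $\Z$-coefficients. By Proposition \ref{k-k'}, a polarized $\Q$-Hodge structure of weight $2$ is, after choosing a lattice $V_\Z\subseteq V_\Q$ on which $Q$ takes integral values (rescale $Q$ if necessary, which does not affect the polarization axioms up to a positive rational factor), the same datum as a polarized $\Z$-Hodge structure up to the ambiguity of the lattice; changing the lattice by commensurability corresponds exactly to an isogeny of the associated Abel tori $A_{V_\Z}=V_\R/V_\Z$, just as in the proof of Proposition \ref{q_1_equals_tori_isog}. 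So the content is the \emph{integral} equivalence: polarized $\Z$-Hodge structures of weight $2$ correspond to left-invariant polarized CR Hermitian tori, after which the localization-by-isogeny formalism transports it to the $\Q$-statement verbatim.

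For the integral statement I would proceed in three steps. First, by Proposition \ref{z_2_equals_bi-cr_tori} the underlying $\Z$-Hodge structure of weight $2$ gives a bi-CR torus $A_{V_\Z}$, and by Proposition \ref{polar_r_2_equals_cr_herm} a polarization $Q$ equips the tangent space $V_\R$ with a CR Hermitian structure $g$; since everything is built from left-invariant data, $g$ defines a left-invariant pseudo-Riemannian metric on $A_{V_\Z}$ compatible with the bi-CR structure. Second, I must check that this metric is \emph{polarized} in the sense of the paper, i.e. that the action $\int_0^1 g(d\gamma(\pd_t),d\gamma(\pd_t))\,dt$ of every closed geodesic is rational. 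On a torus $V_\R/V_\Z$ with left-invariant metric $g$, the geodesics are the images of straight lines, and the closed ones are exactly the segments from $0$ to a lattice vector $v\in V_\Z$; parametrizing $\gamma(t)=tv$ one gets $d\gamma(\pd_t)=v$ constantly, so the action is simply $g(v,v)$. Thus the polarization condition on the torus is precisely the statement that $g(v,v)\in\Q$ for all $v\in V_\Z$, which holds because $g$ is (a rational multiple of) the restriction of the integral form $Q$ to $V_\Z$ via the CR Hermitian correspondence. Conversely, given a left-invariant polarized CR Hermitian torus, the lattice $V_\Z$ is the fundamental group, the bi-CR structure on $V_\R$ gives the weight-$2$ Hodge structure by Proposition \ref{r_2_equals_bi-CR}, and rationality of all $g(v,v)$ together with polarization of $g$ forces (after clearing denominators and using the polarization identity to recover the full bilinear form from its diagonal, valid since $Q$ is symmetric) a $\Q$-valued form on $V_\Q$ satisfying the three polarization axioms.

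Third, the morphism correspondence: a morphism of polarized $\Z$-Hodge structures is a Hodge map preserving $Q$, which under the dictionary is a homomorphism of the bi-CR tori that is an isometry for the left-invariant metrics, i.e. a morphism of polarized CR Hermitian tori; this is formal from Propositions \ref{z_2_equals_bi-cr_tori} and \ref{polar_r_2_equals_cr_herm}. Finally, localizing both categories — $\Z$-Hodge structures of weight $2$ at the ``multiply the lattice'' maps, and polarized CR Hermitian tori at isogenies — and invoking Proposition \ref{k-k'} exactly as in Proposition \ref{q_1_equals_tori_isog}, yields the claimed equivalence with $\Q$-coefficients. The main obstacle I anticipate is the second step, and in particular the converse direction: one must be sure that ``rational action of every closed geodesic'' is genuinely equivalent to ``$Q$ is $\Q$-valued'' and not merely to some weaker condition (for a pseudo-Euclidean $g$ the diagonal values $g(v,v)$ can vanish on a nonzero $v$, so recovering the off-diagonal entries of $Q$ from the geodesic data requires care, and one should confirm that the polarization identity over $\Q$ plus the positivity/negativity on $F$ and $H$ suffice to pin down a bona fide polarization rather than just a rational quadratic form). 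Verifying that this reconstruction lands inside the class of \emph{polarizations} — all three axioms, with the correct signs — is where the real work lies; the torus-theoretic and localization parts are routine adaptations of the weight-$1$ arguments already in the paper.
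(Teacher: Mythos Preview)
Your proposal is correct and follows essentially the same route as the paper: invoke the already-established equivalences (Propositions \ref{polar_r_2_equals_cr_herm}, \ref{z_2_equals_bi-cr_tori}, \ref{q_1_equals_tori_isog}) to reduce the statement to identifying the residual rationality condition, then observe that closed geodesics on the flat torus $V_\R/V_\Z$ are projections of segments $[0,v]$ with $v\in V_\Z$ and that their action equals $Q(v,v)$, so the geometric polarization condition is exactly rationality of the diagonal of $Q$. The paper's proof is terser on the converse---it simply asserts that ``$Q$ is rational if and only if all the values $Q(v,v)$ are''---whereas you correctly isolate the polarization identity as the mechanism and worry about isotropic lattice vectors; but since $Q$ is symmetric and $2$ is a unit in $\Q$, the polarization identity recovers all of $Q$ from its diagonal regardless of isotropy, so your anticipated obstacle is not actually one.
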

\begin{proof}
By Propositions \ref{polar_r_2_equals_cr_herm}, \ref{z_2_equals_bi-cr_tori} and \ref{q_1_equals_tori_isog} the category of polarized $\Q$-Hodge structures of weight $2$ is equivalent to the category of left-invariant CR Hermitian tori up to isogeny with some integrality conditions satisfied. Let us figure out what these conditions are. 

Given a polarized $\Q$-Hodge structure of weight $2$ $V_\Q$, fix some polarized $\Z$-Hodge structure of weight $2$ $V_\Z$ such that $V_\Q=V_\Z\otimes\Q$. Denote the corresponding pseudo-Riemannian metric on the torus $A_{V_\Z}$ by $g$. The value of the symmetric form $Q$ is determined by the pseudo-norm of vectors on the lattice $V_\Z$ w.~r.~t. $Q$, and $Q$ is rational if and only if all the values $Q(v,v)$ are. The projection of a segment $[0,v]\subset V_\R$ to $A_{V_\Z} = V_\R / V_\Z$ is a closed geodesic, and square of its length w.~r.~t. the pseudo-Riemannian metric $g$ equals $Q(v,v)$. Hence the desired integrality condition is the rationality of squares of lengths of all closed geodesics.
\end{proof}

This Proposition may be used for a nice derivation of description of the moduli space of polarized CR Hermitian tori given in \cite{CT}: indeed, it is the same as the well-known moduli space of polarized Hodge structures.

CR vector spaces arise as real subspaces in complex, and the following Definition asserts that CR Hermitian vector spaces arise as real subspaces in Hermitian.

\begin{defn}
For an $\R$-vector subspace $V \subseteq U$ of a Hermitian vector space $(U,h)$ we call the CR Hermitian structure $g$ defined by $g|_{V\cap\sqrt{-1}V}=-\mathrm{Re}~h|_{V\cap\sqrt{-1}V}$ and $g|_{(V\cap\sqrt{-1}V)^\perp} = \mathrm{Re}~h|_{(V\cap\sqrt{-1}V)^\perp}$ as the {\it induced CR Hermitian structure}. If $Y \subseteq X$ is a $C^\infty$-submanifold in an almost Hermitian manifold $(X,h)$, then we also call the almost CR Hermitian structure on $Y$ given at each point $y \in Y$ as the induced CR Hermitian structure on $T_yY \subseteq T_yX$ as the {\it induced almost CR Hermitian structure}.
\end{defn}

\begin{pr}
\label{cr_Hermitian_extension}
Let $V \subseteq U$ be an $\R$-vector subspace of a $\bC$-vector space $U$, and $H$ be the induced CR structure. Then for any CR Hermitian structure $(V,H,F,I,J,g)$ extending the CR structure $(V,H)$ there exists a Hermitian structure on $U$ which induces this CR Hermitian structure on $V$.
\end{pr}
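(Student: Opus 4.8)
The plan is to construct the desired Hermitian form first on the complex subspace $\bC V=V+\sqrt{-1}V$ of $U$ spanned by $V$, exploiting a canonical real direct sum decomposition of $\bC V$ adapted to the bi-CR data, and then to extend it to all of $U$ by adjoining an arbitrary orthogonal complement. Throughout I use that a positive-definite Hermitian form on a complex vector space amounts to the same datum as its real part, which is a positive-definite $\sqrt{-1}$-invariant Euclidean metric, and that the ``induced CR Hermitian structure'' depends on $h$ only through $\mathrm{Re}~h$.

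First I would establish the decomposition. Since $H=V\cap\sqrt{-1}V$ and $V=H\oplus F$, elementary manipulations with $F\subseteq V$ and $\sqrt{-1}F\subseteq\sqrt{-1}V$ give $F\cap\sqrt{-1}F=0$, then $H\cap(F\oplus\sqrt{-1}F)=0$, and finally $H+F+\sqrt{-1}F=\bC V$ (using $\sqrt{-1}V=H\oplus\sqrt{-1}F$); hence $\bC V=H\oplus(F\oplus\sqrt{-1}F)$ as a direct sum of complex subspaces, the summand $F\oplus\sqrt{-1}F$ being $\sqrt{-1}$-invariant because $\sqrt{-1}(F\oplus\sqrt{-1}F)=\sqrt{-1}F\oplus F$. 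On $H$, the form $-g|_H$ is positive definite and invariant under $I=\sqrt{-1}|_H$ by the CR Hermitian axioms, hence equals $\mathrm{Re}~h_H$ for a unique positive-definite Hermitian form $h_H$ on $H$. On $F\oplus\sqrt{-1}F$, let $h_F$ be the complexification of the Euclidean space $(F,g|_F)$, that is, $h_F(u_1+\sqrt{-1}v_1,u_2+\sqrt{-1}v_2)=\bigl(g(u_1,u_2)+g(v_1,v_2)\bigr)+\sqrt{-1}\bigl(g(v_1,u_2)-g(u_1,v_2)\bigr)$ for $u_j,v_j\in F$; one checks this is positive-definite Hermitian and $\mathrm{Re}~h_F|_F=g|_F$.

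Now set $h'=h_H\perp h_F$ on $\bC V=H\oplus(F\oplus\sqrt{-1}F)$, choose a complex complement $U=\bC V\oplus U'$, equip $U'$ with any positive-definite Hermitian form, and let $h$ be the orthogonal direct sum: a positive-definite Hermitian form on $U$. It remains to check that $(U,h)$ induces the prescribed CR Hermitian structure on $V$. By construction $V\cap\sqrt{-1}V=H$ with CR operator $\sqrt{-1}|_H=I$, and $\mathrm{Re}~h|_H=-g|_H$, $\mathrm{Re}~h|_F=g|_F$, $\mathrm{Re}~h(H,F)=0$ (the summands being $h$-orthogonal). Since $\mathrm{Re}~h|_V$ is positive definite, $F$ is contained in, and by a dimension count equal to, the $\mathrm{Re}~h|_V$-orthogonal complement of $H$ in $V$; thus the distinguished cocomplex subspace $F$ is reproduced, and with it the operator $J$, which in any bi-CR vector space is determined by $I$ and the splitting $V=H\oplus F$. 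Finally $-\mathrm{Re}~h|_H=g|_H$ and $\mathrm{Re}~h|_F=g|_F$ are precisely the two pieces defining the induced metric, so the induced CR Hermitian structure is exactly $(V,H,F,I,J,g)$. The only delicate point, rather than a genuine obstacle, is the decomposition $\bC V=H\oplus(F\oplus\sqrt{-1}F)$, which is exactly where the hypothesis $H=V\cap\sqrt{-1}V$ is used, together with the observation that the $I$-invariance of $g|_H$ is precisely what makes $h_H$ exist; the CR Hermitian axioms are thereby used optimally and nothing further is required.
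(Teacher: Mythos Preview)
Your proof is correct and follows essentially the same construction as the paper's: decompose $\bC V$ as $H\oplus(F\oplus\sqrt{-1}F)$ and build the Hermitian form from $-g|_H$ on the first summand and the complexification of $g|_F$ on the second. Your version is more thorough in two respects: you verify the decomposition and the compatibility conditions explicitly, and you handle the extension from $\bC V$ to a possibly larger $U$ via an arbitrary Hermitian complement, whereas the paper's proof tacitly assumes $V\oplus\sqrt{-1}F=U$ (equivalently $\bC V=U$), which is the situation in its intended application but not stated in the hypothesis.
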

\begin{proof}
By definition of a bi-CR structure, $\sqrt{-1}F \subset U$ is an $\R$-complement to $V$ inside $U$. Then we can define a symmetric $\R$-bilinear form $\widetilde{g}$ on $U$ by $\widetilde{g}|_H = -g|_H$, $\widetilde{g}|_F = g|_F$ and for $v\in\sqrt{-1}F$ set $\widetilde{g}(v) = g(-\sqrt{-1}v)$. This is a real part of a Hermitian structure which obviously induces on $V$ the CR Hermitian structure $g$.
\end{proof}

\begin{pr}
For a Hodge Cousin group $J_{V_\Z}$ of a polarized $\Z$-Hodge structure of weight $2$ $V_\Z$ there exist a Hermitian metric on $J_{V_\Z}$ such that the induced CR Hermitian metric on the maximal compact subgroup $A_{V_\Z} \subseteq J_{V_\Z}$ coinicides with the structure described by Proposition \ref{polar_q_2_equals_cr_herm_tori}.
\end{pr}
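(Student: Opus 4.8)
The plan is to reduce the assertion to the linear-algebraic Proposition~\ref{cr_Hermitian_extension}, exploiting that a left-invariant Hermitian metric on $J_{V_\Z}$ is the same datum as a Hermitian inner product on its universal cover, and likewise that the induced structures on the maximal compact subgroup are detected at the identity.

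First I would set up the universal covers. The universal cover $U$ of $J_{V_\Z}$ is a complex vector space, and by Proposition~\ref{maximality} the maximal compact subgroup of $J_{V_\Z}$ is $\gamma(A_{V_\Z})$, whose universal cover is the $\R$-subspace $\gamma(V_\R)\subseteq U$. By Proposition~\ref{iso} the CR structure induced on $\gamma(V_\R)$ by the complex structure of $U$ is carried, via $\gamma$, to the CR structure on $V_\R$ underlying the weight $2$ Hodge structure $V_\Z$. On the other hand, the structure on $A_{V_\Z}$ described by Proposition~\ref{polar_q_2_equals_cr_herm_tori} is, by construction (through Propositions~\ref{polar_r_2_equals_cr_herm} and \ref{z_2_equals_bi-cr_tori}), the left-invariant CR Hermitian metric whose value at the origin is the CR Hermitian vector space $(V_\R,H,F,I,J,Q)$ attached to the polarization $Q$. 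Transporting it by $\gamma$ produces on $\gamma(V_\R)$ a CR Hermitian structure that extends the CR structure $\gamma(V_\R)$ inherits from $U$.

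Next I would apply Proposition~\ref{cr_Hermitian_extension} to the inclusion $\gamma(V_\R)\subseteq U$: it yields a Hermitian inner product on $U$ whose induced CR Hermitian structure on $\gamma(V_\R)$ is exactly the one just built. Being a constant Hermitian form on $U=T_eJ_{V_\Z}$, it descends to a left-invariant Hermitian metric $h$ on $J_{V_\Z}=U/\pi(V_\Z)$. To conclude, I would observe that both $h$ and the submanifold $\gamma(A_{V_\Z})$ are invariant under the translations of the group $J_{V_\Z}$, so the induced almost CR Hermitian structure on $\gamma(A_{V_\Z})$ is itself left-invariant; hence it is determined by its value at the identity, and there it is, by construction, the CR Hermitian structure coming from $Q$, that is, the structure of Proposition~\ref{polar_q_2_equals_cr_herm_tori}.

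The only delicate point is the bookkeeping in the second paragraph: one must check that the CR structure carried by the maximal compact subgroup as a real submanifold of the complex manifold $J_{V_\Z}$ agrees, after lifting to universal covers and applying $\gamma$, with the CR structure underlying $V_\Z$ (this is precisely the content of Propositions~\ref{iso} and \ref{maximality}), and that the manifold-level notion of induced CR Hermitian structure reduces, for a left-invariant metric on a Lie group and a closed subgroup, to the vector-space notion at the identity. Granting these identifications, the statement follows from the single application of Proposition~\ref{cr_Hermitian_extension} above, and I anticipate no substantive obstacle.
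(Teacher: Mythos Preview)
Your proposal is correct and follows essentially the same route as the paper: apply Proposition~\ref{cr_Hermitian_extension} at the identity to produce the desired Hermitian inner product on $T_eJ_{V_\Z}$, then propagate it over the whole group by translations (the paper says ``right shifts''; since the group is abelian this is the same as your left-invariant extension). Your write-up is more explicit about the identifications via $\gamma$ and Propositions~\ref{iso} and~\ref{maximality}, but the argument is the same single application of Proposition~\ref{cr_Hermitian_extension}.
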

\begin{proof}
The wanted Hermitian metric on the tangent space $T_eJ_{V_\Z}$ is given by Proposition \ref{cr_Hermitian_extension}. Then it can be extended to all the group $J_{V_\Z}$ via the right shifts.
\end{proof}

Of course in this case actions of closed geodesics w.~r.~t. induced CR Hermitian metric on the subgroup $A_{V_\Z} \subseteq J_{V_\Z}$ are rational. This motivates us to give the following 

\begin{defn}
We call an {\it abelian Cousin group} a Cousin group $G$ with left invariant Hermitian metric such that the maximal compact subgroup $G_c$ with its induced CR Hermitian structure is a polarized pseudo-Riemannian manifold.
\end{defn}

Thus we have a particular answer to the Question from the Section 4 of the present paper.
\begin{pr}
\label{answer_to_problem}
Any abelian Cousin group is Hodge Cousin group of a certain $\Z$-Hodge structure of weight $2$.
\end{pr}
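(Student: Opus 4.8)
The plan is to recover a weight-$2$ $\Z$-Hodge structure from the linear-algebraic data carried by $G$ and its maximal compact subgroup, and then to exhibit $G$ as the associated Jacobi group. By Proposition~\ref{Verbitsky} we may write $G=W/\Lambda$ with $W$ a complex vector space and $\Lambda\subset W$ a discrete subgroup, and the first point is that $\Lambda$ spans $W$ over $\bC$: otherwise the $\bC$-linear quotient map $W\to W/\bC\langle\Lambda\rangle$ onto a nonzero complex vector space would kill $\Lambda$ and descend to a nonconstant holomorphic map on $G$, contradicting the Cousin property. Putting $V_\R=\R\langle\Lambda\rangle$, we thus have $W=V_\R+\sqrt{-1}V_\R$, the lattice $\Lambda$ is of full rank in $V_\R$, and $G_c=V_\R/\Lambda$ is the maximal compact subgroup of $G$.

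Next I would read off the Hodge data. As a real subspace of $W$, the space $V_\R$ carries the CR structure with distinguished complex subspace $H=V_\R\cap\sqrt{-1}V_\R$ (Proposition~\ref{example}); the left-invariant Hermitian metric is a Hermitian form $h$ on $T_eG=W$, and the induced CR Hermitian structure on $G_c$ provides in addition the $g$-orthogonal complement $F=H^{\perp}$, for which $V_\R=H\oplus F$ (as $g|_H$ is negative definite, hence nondegenerate), with $g|_H$ moreover $I$-invariant and $g|_F$ positive definite. By Proposition~\ref{r_2_equals_bi-CR} this bi-CR vector space is nothing but an $\R$-Hodge structure of weight $2$ on $V_\R$, with $V^{2,0}=H^{1,0}$, $V^{1,1}=F\otimes\bC$ and $V^{0,2}=\overline{H^{1,0}}$; carrying the lattice along and applying Proposition~\ref{z_2_equals_bi-cr_tori} promotes it to a $\Z$-Hodge structure of weight $2$, which I will call $V_\Z$ and whose Abel torus is exactly $G_c$. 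The polarization hypothesis enters here through Proposition~\ref{polar_q_2_equals_cr_herm_tori}: the closed geodesics of $G_c$ are the images of the segments $[0,v]$, $v\in\Lambda$, with action $g(v,v)$, so rationality of all of these together with the polarization identity forces $g$ — which by Proposition~\ref{polar_r_2_equals_cr_herm} is the polarization form of the $\R$-Hodge structure — to be $\Q$-valued on $\Lambda$, whence after clearing denominators $V_\Z$ is even a polarized $\Z$-Hodge structure of weight $2$.

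It remains to identify $G$ with $J_{V_\Z}$ as a complex Lie group, and this is the step I expect to be the crux. Let $\gamma\colon V_\R\to V^{2,0}\oplus V^{1,1}$ be the canonical embedding; recall that it is a CR isomorphism onto its image (Proposition~\ref{iso}) whose $\bC$-span is all of $V^{2,0}\oplus V^{1,1}$, and that, as in the proof that $\gamma$ descends to an embedding $A_{V_\Z}\hookrightarrow J_{V_\Z}$, one has $\gamma(\Lambda)=\pi(V_\Z)$. The key observation is that $\gamma$ extends to a $\bC$-linear map $\widehat\gamma\colon W\to V^{2,0}\oplus V^{1,1}$ by $\widehat\gamma(v+\sqrt{-1}w)=\gamma(v)+\sqrt{-1}\gamma(w)$ for $v,w\in V_\R$; this is well defined because any two such representations of the same element of $W$ differ by a vector of $H$, on which $\gamma$ commutes with multiplication by $\sqrt{-1}$ by CR linearity. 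Then $\widehat\gamma$ is onto $\bC\langle\gamma(V_\R)\rangle=V^{2,0}\oplus V^{1,1}$, and since $\dim_\bC W=\dim_\bC H+\dim_\R F=\dim_\bC V^{2,0}+\dim_\bC V^{1,1}$ it is an isomorphism; as $\widehat\gamma(\Lambda)=\pi(V_\Z)$ it descends to an isomorphism of complex Lie groups $G=W/\Lambda\xrightarrow{\sim}(V^{2,0}\oplus V^{1,1})/\pi(V_\Z)=J_{V_\Z}$. In particular $J_{V_\Z}$ is Cousin, so $G$ is the Hodge Cousin group of the weight-$2$ structure $V_\Z$. The main obstacle is precisely this last extension: $\gamma$ is a priori only an $\R$-linear, merely CR-linear map on the real form $V_\R$, and the content is that the Cousin property (i.e.\ $W=\bC\langle\Lambda\rangle$) together with the bi-CR condition forces it to promote to a $\bC$-linear isomorphism of universal covers carrying the one lattice onto the other.
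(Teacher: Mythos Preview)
Your argument is correct and follows the same route as the paper: reconstruct $V_\R=T_eG_c$ with $V^{2,0}=H^{1,0}$ and $V^{1,1}=(H^{\perp_g})\otimes\bC$, take $V_\Z$ to be the deck lattice, and verify $A_{V_\Z}=G_c$, $J_{V_\Z}=G$. The paper dismisses this last identification as something one ``can easily check,'' whereas you actually carry it out by extending the canonical embedding $\gamma$ to a well-defined $\bC$-linear isomorphism $\widehat\gamma\colon W\to V^{2,0}\oplus V^{1,1}$; your well-definedness argument (ambiguity lives in $H$, where $\gamma$ is $\bC$-linear) and dimension count are both sound, so if anything your version is more complete than the original.
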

\begin{proof}
First reconstruct the $\R$-Hodge structure from the Cousin group $G$. As it was already stated in the Section 4, one needs to take $V_\R = T_e G_c$, where $G_c\subseteq G$ is the maximal compact subgroup, $V^{2,0} = H^{1,0}$, where $H\subseteq T_e G_c$ is the distinguished complex subspace for a CR strcuture on $G_c\subseteq G$, and $V^{0,2}=\overline{V^{2,0}}$. The subspace $V^{1,1}$ can be reconstructed as the complexification of the orthogonal to $H\subseteq T_e G_c$ w.~r.~t. the pseudo-Riemannian metric on $G_c$.

Now note that $V_\R$ is the universal cover of $G_c$, and its kernel is the lattice. Call it $V_\Z$. Then $(V_\Z,Q,V_\bC=V^{2,0}\oplus V^{1,1}\oplus V^{0,2})$ is a polarized $\Z$-Hodge structure of weight $2$. Denote the restriction of the pseudo-Euclidean form on $T_e G_c$ to $V_\Z$ by $Q$. The integrality condition on the Hermitian form on $G$ implies that $Q$ is an integral form, and defines a polarization on the Hodge structure $V_\Z$.

One can easily check that $A_{V_\Z}=G_c$ and $J_{V_\Z}=G$.
\end{proof}

Note that by Proposition \ref{criterion} the constructed Hodge structure $V_\Z$ does not admit a quotient of weight $0$.

\begin{pr}
\label{polar_cousin}
The category of polarized $\Q$-Hodge structures of weight $2$ without quotients of weight $0$ is equivalent to the category of abelian Cousin groups up to isogeny.
\end{pr}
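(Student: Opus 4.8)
The plan is to prove the equivalence by exhibiting a pair of mutually quasi-inverse functors assembled from the constructions already established. \emph{Forward direction.} Given a polarized $\Q$-Hodge structure of weight $2$ $V_\Q$ with no quotient of weight $0$, choose an integral lattice $V_\Z$ with $V_\Z\otimes\Q=V_\Q$, exactly as in the proof of Proposition~\ref{q_1_equals_tori_isog}; then $V_\Z$ is a polarized $\Z$-Hodge structure of weight $2$ admitting no quotient of weight $0$, so by Proposition~\ref{criterion} the Hodge Cousin group $J_{V_\Z}$ is genuinely a Cousin group, and the proposition preceding Proposition~\ref{answer_to_problem} equips it with a left invariant Hermitian metric whose restriction to $A_{V_\Z}$ induces the polarized CR Hermitian structure of Proposition~\ref{polar_q_2_equals_cr_herm_tori}; the remark following that proposition shows the closed geodesics on $A_{V_\Z}$ have rational action, so $J_{V_\Z}$ is an abelian Cousin group. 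Replacing $V_\Z$ by a commensurable lattice yields, exactly as in Proposition~\ref{q_1_equals_tori_isog}, an isogenous Jacobi group, so the assignment is well defined up to isogeny; a morphism of polarized $\Q$-Hodge structures becomes, after multiplication by a suitable nonzero integer, an orthogonal morphism of the chosen $\Z$-Hodge structures, hence induces a homomorphism $J_{V_\Z}\to J_{W_\Z}$ isometric for the Hermitian metrics, and this passes to the localizations.

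\emph{Backward direction.} Given an abelian Cousin group $G$, Proposition~\ref{answer_to_problem} reconstructs a polarized $\Z$-Hodge structure of weight $2$ $V_\Z$ with $A_{V_\Z}=G_c$ and $J_{V_\Z}=G$, and by the remark following that proposition $V_\Z$ has no quotient of weight $0$; set the output to be $V_\Z\otimes\Q$. An isogeny $G\to G'$ induces a finite-index inclusion of the reconstructed lattices, hence an isomorphism after $\otimes\Q$, so this descends to the localized categories, and morphisms are handled as before. For the composites, starting from $V_\Q$, forming $J_{V_\Z}$ and applying the reconstruction of Proposition~\ref{answer_to_problem} returns $V_\R=T_eG_c$, the Hodge subspace $V^{2,0}$ from the CR structure, $V^{1,1}$ from the metric orthogonal of the distinguished complex subspace, and the lattice together with the form $Q$ — that is, $V_\Z$, hence $V_\Q$ up to canonical isomorphism; starting from $G$ the reconstructed $V_\Z$ satisfies $J_{V_\Z}=G$, so choosing a lattice inside $V_\Z\otimes\Q$ recovers $G$ up to isogeny. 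These identifications are natural in the respective categories, so the two functors are quasi-inverse.

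The main obstacle, and the part requiring genuine care, is the bookkeeping with the two localizations: one must verify that the forward functor sends isogenies to isogenies and reflects them, that every roof (span of the form appearing in the definition of localization) representing a morphism of abelian Cousin groups up to isogeny is the image of a roof of polarized $\Q$-Hodge structures, and that the requirement ``orthogonal map of Hodge structures'' matches exactly ``homomorphism isometric for the Hermitian metric'' at the level of abelian Cousin groups. Here one uses the Poincar\'e-type semisimplicity of polarized $\Q$-Hodge structures (taking $Q$-orthogonals) to see that kernels and images of such morphisms again carry polarizations and again have no quotient of weight $0$, so that all the objects arising stay inside the two categories. Once this is in place, the quasi-inverse functors above give the claimed equivalence; the remaining checks are the same routine compatibilities already used in Propositions~\ref{z_1_equals_tori} and \ref{q_1_equals_tori_isog}.
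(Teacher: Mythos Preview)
Your proposal is correct and follows essentially the same approach as the paper: the paper's proof is a one-line appeal to Propositions~\ref{answer_to_problem} and~\ref{criterion}, and what you have written is precisely the unpacking of that appeal into explicit quasi-inverse functors together with the localization bookkeeping. Your version is considerably more detailed than the paper's, but nothing in it departs from the intended argument.
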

\begin{proof}
This is a combination of Propositions \ref{answer_to_problem} and \ref{criterion}.
\end{proof}

\begin{pr}[Poincar\'e complete reducibility theorem for Cousin groups]
\label{final}
The category of abelian Cousin groups up to isogeny is semisimple.
\end{pr}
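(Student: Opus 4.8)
The plan is to transport the statement across the equivalence of Proposition \ref{polar_cousin}, which identifies the category of abelian Cousin groups up to isogeny with the category of polarized $\Q$-Hodge structures of weight $2$ admitting no quotient of weight $0$; it therefore suffices to prove that this latter category is semisimple. For that I would run, in weight $2$, the same orthogonal-complement argument already used above for abelian varieties: given an object $V$ with polarization $Q$ and a sub-Hodge-structure $W\subseteq V$, produce a complement as $W^{\perp}$, the $Q$-orthogonal of $W$ inside $V$.

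The first step is to check that $W^{\perp}$ is again an object of the category. Here $Q$ is symmetric (axiom (i) with $n=2$), and axiom (ii) read off in weight $2$ says that $Q$ kills $V^{p,q}\times V^{r,s}$ unless $(r,s)=(q,p)$, that is, $Q$ pairs $V^{2,0}$ with $V^{0,2}$ and pairs $V^{1,1}$ with itself. A short computation with this grading shows that $W_{\bC}^{\perp}$ is homogeneous for the Hodge decomposition of $V_{\bC}$ and, being defined over $\Q$, is stable under conjugation; hence $W^{\perp}$ is a sub-$\Q$-Hodge-structure of weight $2$, on which $Q$ restricts to a polarization (axioms (i)--(iii) are inherited by restriction). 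The same remarks apply to $W$ itself.

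Next I would verify that $V=W\oplus W^{\perp}$ as polarized Hodge structures. This is the one place where weight $2$ differs from weight $1$: the form $Q$ is now only pseudo-Euclidean rather than definite, so nondegeneracy of $Q|_{W}$ does not follow from a single positivity statement. It is still forced, however, by the Riemann positivity axiom (iii) applied to $W$: exactly as in the proof of Proposition \ref{polar_r_2_equals_cr_herm}, $Q$ is negative definite on $(W^{2,0}\oplus W^{0,2})^{\Gal(\bC:\R)}$ and positive definite on $(W^{1,1})^{\Gal(\bC:\R)}$, hence nondegenerate on all of $W$. Therefore $W\cap W^{\perp}=0$, and comparing dimensions gives the direct sum decomposition. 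Finally, the property ``no quotient of weight $0$'' is stable under passing to a direct summand, since a summand of $V$ is simultaneously a sub-object and a quotient-object of $V$, so a weight-$0$ quotient of $W$ or of $W^{\perp}$ would produce one of $V$; thus $W$ and $W^{\perp}$ both lie in the category. Iterating the splitting on $\dim_{\Q}V$ then expresses every object as a direct sum of simple objects, which is the claimed semisimplicity.

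The genuine content is modest, and the only real subtlety is the one just isolated: since the weight-$2$ polarization form is indefinite, nondegeneracy of the polarization on a sub-object has to be extracted from the relations $\sqrt{-1}^{p-q}Q(u,\overline{u})>0$ piece by piece rather than from overall definiteness. A secondary, purely bookkeeping matter is to make sure that under Proposition \ref{polar_cousin} the notions of sub-object, quotient and simple object correspond on the two sides, so that the decomposition produced on the Hodge side really does witness semisimplicity of the category of abelian Cousin groups up to isogeny. Apart from that, the proof is verbatim Poincar\'e.
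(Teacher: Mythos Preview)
Your proposal is correct and follows the same route as the paper: transport the question via Proposition~\ref{polar_cousin} to polarized weight-$2$ $\Q$-Hodge structures without weight-$0$ quotients, and use orthogonal complements with respect to $Q$ there. You spell out more than the paper does---in particular the nondegeneracy of $Q|_W$ in the indefinite weight-$2$ setting and the closure of the subcategory under direct summands---whereas the paper simply asserts semisimplicity of all polarized $\Q$-Hodge structures and records the companion closure under direct sums via the same $\Hom(U\oplus V,W)=\Hom(U,W)\oplus\Hom(V,W)$ observation.
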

\begin{proof}
By Proposition \ref{polar_cousin}, it is equivalent to a full subcategory of $\Q$-Hodge structures category consisting of polarized $\Q$-Hodge structures of weight $2$ which do not admit a quotient of weight $0$. The category of polarized $\Q$-Hodge structures is itself semisimple, so one needs to check that sum of polarized $\Q$-Hodge structures of weight $2$ without weight $0$ quotients do not admit weight $0$ quotients itself. Indeed, if $W$ is a weight $0$ Hodge structure and $0\neq\Hom(U \oplus V,W) = \Hom(U,W)\oplus\Hom(V,W)$, then either $\Hom(U,W)$ or $\Hom(V,W)$ needs to be nonzero.
\end{proof}

\paragraph*{Acknowledgements.} I would like to thank Misha Verbitsky for turning my mind to the problem and reading a draft of this paper. Many thanks to Renat Abugaliev, Alexander Petrov, Kostiantyn Tolmachov, Lev Soukhanov and Bogdan Zavyalov for fruitful discussions.

\noindent {\sc Rodion N. D\'eev\\
Independent University of Moscow\\
119002, Bolshoy Vlasyevskiy Pereulok 11\\ 
Moscow} \\
\tt deevrod@mccme.ru, also:\\
{\sc Department of Mathematics\\
Courant Institute, NYU \\
251 Mercer Street \\
New York, NY 10012, USA,} \\
\tt rodion@cims.nyu.edu

\end{document}